\newlength\bshft
\def\fakebold#1{\ThisStyle{\ooalign{$\SavedStyle#1$\cr%
\kern-\bshft$\SavedStyle#1$\cr%
\kern\bshft$\SavedStyle#1$}}}
\newcommand{\norm}[1]{\left\Vert {#1} \right\Vert}
\newcommand{\abs}[1]{\left\vert {#1} \right\vert}
\newcommand{\defeq}{\vcentcolon=}
\DeclareMathOperator{\diver}{{\rm div}}
\DeclareMathOperator{\curl}{\mathbf{curl}}
\renewcommand{\gg}{\boldsymbol{g}}
\newcommand{\xx}{\boldsymbol{x}}
\newcommand{\mm}{\boldsymbol{m}}
\renewcommand{\AA}{\boldsymbol{A}}
\newcommand{\vv}{\boldsymbol{v}}
\newcommand{\ww}{\boldsymbol{w}}
\newcommand{\nn}{\boldsymbol{n}}
\newcommand{\PiV}{{\bm \Pi}_k^0}
\newcommand{\R}{\mathbb{R}}
\newcommand{\err}{\texttt{err}}
\newcommand{\errApp}{\texttt{errApp}}
\newcommand{\errGeo}{\texttt{errGeo}}
\renewcommand{\P}{\mathbb{P}}
\newcommand{\ds}{~\text{d}\mathfrak{F}}
\newcommand{\dF}{~\text{d}F}
\newcommand{\dO}{~\text{d}\Omega}
\newcommand{\dP}{~\text{d}P}
\newcommand{\dt}{~\text{d}t}
\newcommand{\F}{\mathbf{F}}
\newcommand{\as}[1]{#1}
\newcommand{\af}[1]{#1}
\renewcommand{\sout}[1]{}
\renewcommand{\todo}[2][1]{}
\title{Bend 3d Mixed Virtual Element Method \as{for elliptic problems}}
\author{Franco Dassi\thanks{Dipartimento di Matematica e Applicazioni, Universit\`a degli Studi di Milano Bicocca, Via Roberto Cozzi 55 - 20125 Milano, Italy (\email{franco.dassi@unimib.it}).}
\and Alessio Fumagalli\thanks{MOX - Dipartimento di Matematica ``F. Brioschi'',
Politecnico di Milano, via Bonardi 9, 20133 Milan, Italy
(\email{alessio.fumagalli@polimi.it}, \email{anna.scotti@polimi.it}).}
\and Anna Scotti\footnotemark[2]
\and Giuseppe Vacca\thanks{Dipartimento di Matematica, Universit\`a degli Studi
di Bari, Piazza Umberto I - 70121 Bari, Italy
(\email{giuseppe.vacca@uniba.it})}
}
\begin{document}

\maketitle


%
\begin{abstract}
    In this study, we \sout{consider a} \as{propose a} virtual
    element scheme to solve the Darcy problem in three physical dimensions.
    The main novelty, here proposed, is that curved elements are naturally
    handled without any degradation of the solution accuracy. In fact, in
    presence of curved boundaries, or internal interfaces, the geometrical error
    introduced \as{by planar approximations} may dominate the convergence rate limiting the \sout{usage} \as{benefit} of
    high-order approximations. 
    \as{We consider the Darcy problem in its mixed form to directly obtain, with our numerical scheme, accurate and mass conservative fluxes without any postprocessing.}
    An important step to derive this new scheme
    is the actual computation of polynomials over curved polyhedrons, here
    presented and discussed. Finally, we show the theoretical analysis of the scheme as
    well as several numerical examples to support our findings.
\end{abstract}
\begin{keyword}
    Mixed VEM, Curved faces, High order approximations, Integration over curved polyhedrons
\end{keyword}


\section{Introduction}\label{sec:intro}

Fluid flow in porous media is a broad research area that involves scientists as
well as engineers from both private and public sector \as{due to its relevance in}
\sout{Very successful examples are related to}
energy management, for instance hydrocarbon extraction, long term  CO$_2$ sequestration, geothermal extraction and storing, \as{as well as in several applications in industry and life science.}
Moreover, an interesting and \sout{actual} \as{current} application is the prevention of groundwater pollution due to
human activities like landfills leakage, nuclear waste disposal, and remediation
of polluted industrial sites~, see e.g.~\cite{bear1993flow,MacMinn2010}.

\as{In subsurface flow simulations, and in particular in oil field modeling,}   \sout{Concerning the oil flow in the ground,}
the so-called {corner-point grid} is one of the most common ways \as{to discretize the domain.}
Corner point grids are composed by hexahedrons whose \as{facets} \sout{boundaries} are curved, more specifically they are bilinear functions~\cite{Aarnes2008}.
To compute geometrical properties on such meshes, the
standard approach consists in discretizing these cells by means of suitable sub-grids. However, in the presence of particular configurations,
such as pinched or highly distorted elements,
the geometrical error introduced by \as{this approximation based on a sub-tetrahedrizatoin,}  may pollute the accuracy of the whole solution.
\sout{Wells are crucial in the management of the reservoir since
the outflow of fluids in normally enforced by injecting at high pressure. (toglierei questa frase, blocca il flusso del discorso)}
In the context of the Finite Element Method (FEM),
an accurate description of the curved geometry is a key point
to preserve the quality of the solution itself especially for high order approximations.

In this paper, we propose a new numerical strategy to overcome this issue, i.e.,
we propose a method that avoids any geometrical error and
incorporates the presence of cells with curved faces {without} \as{the need for} sub-grids.
Such strategy is based on the Virtual Element Method (VEM) and
it is inspired by the seminal works~\cite{BeiraodaVeiga2019,Dassi2020,Dassi2021a}.
The VEM is an extension of the FEM to deal with polygonal/polyhedral grids.
Indeed, it follows mostly the classical finite element framework
but the key point is that the basis functions are left unknown and never computed~\cite{BeiraodaVeiga2013a},
It was successfully exploited for both 2d and 3d problemsc see, e.g.,~\cite{BeiraodaVeiga2014a,Brezzi2014,BeiraodaVeiga2017,BeiraodaVeiga2018,BeiraodaVeiga2020}
The possibility to deal with elements characterized by very general shape (also non-covex)
is appealing for real life problems.
In particular VEM have already been extensively employed in reservoir simulations,
where meshes are characterized by involved geometries due to the presence of fractures and faults,
see for instance~\cite{Benedetto2014,Fumagalli2016a,Benedetto2016,Benedetto2017,Fumagalli2017a}.

In this paper we are modelling a single-phase flow in porous media via a mixed formulation, i.e.,
we consider both the macroscopic (or Darcy) velocity and the pressure as unknowns.
Although we have to solve a linear system with a saddle-point structure,
such formulation has several advantages.
Indeed, it gives a velocity field that is locally mass conservative and thus perfectly suited for \as{the subsequent simulation of} other processes,
such as transport phenomena.
It is also suited for applications with strong variations in the coefficients, \sout{like the}, \as{typically} permeability,
see, e.g.,\cite{Raviart1977,Brezzi1985,Brezzi1987,Roberts1991,Boffi2013} and
\cite{BeiraodaVeiga2014b,Brezzi2014,BeiraoVeiga2016,Fumagalli2016a,Benedetto2017,Fumagalli2017a}
in the context of FEM and VEM, respectively.
Moreover, the virtual element spaces exploited to discretize the Darcy velocity are polynomial over the faces,
while in the primal formulation the unknown function is a virtual function~\cite{BeiraodaVeiga2017}.
In the following sections we will see that this is {the} key point we exploit
to incorporate in the functional spaces the exact geometry \as{of the domain and/or the cells boundaries}.


The main contribution of this work is to introduce, analyze and validate the Mixed Virtual Element Method (MVEM) for problems characterized by curved boundaries or internal curved interfaces.
 \as{The piece-wise planar approximation of these surfaces, which is often used in practice with traditional methods,}  introduces a geometrical error
that corrupts the optimal error decay, especially in the high order case.
To \sout{achieve this goal} \as{obtain a high order approximation with curved geometries}, we proceed on two fronts.
On one hand we extend the quadrature rule for curved polygons introduced  in~\cite{Sommariva2009,Sommariva2015,BeiraodaVeiga2019} to deal with polyhedrons characterized by curved faces.
On the other hand we extend the strategy proposed in~\cite{Dassi2020} for mixed VEM in 2D to the three dimensional case.


This paper is organized as follows.
We introduce some notation useful for the rest of the work in Section~\ref{sec:notations}.
Then, in Section~\ref{sec:problem}, we present the mathematical problem in its strong and weak formulation.
Section~\ref{sec:vemSpaces} contains the detailed description of the VEM spaces and projection operators.
We extend the two stability analyses carried out in ~\cite{Dassi2020} for the 2D case to the three dimensional case in Section~\ref{sec:stability}.
In Section~\ref{sec:integration} we describe the quadrature rules exploited to integrate function in polyhedrons characterized by curved faces.
Numerical examples, inspired from real applications, are given in Section \ref{sec:numExe}.
Finally, in Section \ref{sec:conc}, we draw some conclusions.

\section{Notations and mesh assumptions}\label{sec:notations}

Throughout the paper, we follow the usual notation for Sobolev spaces and norms~\cite{Adams:1975}.
We employ the usual notations for gradient and Laplacian of scalar functions,
while $\diver$ denotes the divergence for vector fields.

Given a bounded domain $\omega \subset\mathbb{R}^3$, we indicate as
$\|{\cdot}\|_{W^s_p(\omega)}$ and $\|{\cdot}\|_{L^p(\omega)}$ the
norms in the spaces $W^s_p(\omega)$ and $L^p(\omega)$, respectively.
We denote the norm and semi-norm in $H^{s}(\omega)$ via $\|{\cdot}\|_{s,\omega}$
and $|{\cdot}|_{s,\omega}$. To keep the notation simpler, we adopt
$(\cdot,\cdot)_{\omega}$ and $\|\cdot\|_{\omega}$ to be the $[L^2(\omega)]^{d_1\times
d_2}$-inner product
and the $[L^2(\omega)]^{d_1\times d_2}$-norm, for $d_1\geq 1$ and $d_2 \geq 1$.
For the theoretical analysis of the present problem,
we use the following well-known functional spaces:
\begin{gather*}
    H(\diver, \omega) \defeq \{\bm{v} \in [L^3(\omega)]^3: \,   \diver
    \bm{v} \in L^2(\omega)\} \,,
    \\
    H(\curl, \omega) \defeq \{\bm{v} \in [L^3(\omega)]^3 :\,
    \curl \bm{v} \in [L^2(\omega)]^3\} \,,
\end{gather*}
with scalar products and induced norms indicated as $(\cdot,
\cdot)_{H(\diver, \omega)}$ and $\Vert{\cdot}\Vert_{H(\diver, \omega)}$, and $(\cdot,
\cdot)_{H(\curl, \omega)}$ and $\Vert{\cdot}\Vert_{H(\curl, \omega)}$.
For $\gamma \subset \partial \omega$ with normal $\bm{n}$, we can introduce the following
subspace of $H(\diver, \omega)$ as
\begin{gather*}
    H_\gamma(\diver, \omega) \defeq \{\bm{v} \in H(\diver, \omega):\,
    \bm{v}\cdot \bm{n} = 0\}.
\end{gather*}
Moreover we define $\P_k(\omega)$ the space of polynomials of degree lower or equal to $k$,
and $[\P_k(\omega)]^{d}$ the vector polynomial space of dimension $d>1$.
Moreover, given two quantities $a,b\in\mathbb{R}$,
we shall write ``$a \lesssim b$''
if there exists a positive constant $C$ independent
of the discretization parameters such that $a \leq C b$.

The results of this paper \as{rely} on standard polyhedral mesh assumptions~\cite{apollo}.
We suppose that for a domain $\Omega$ there exists a tessellation $\Omega_h$ that discretizes $\Omega$.
Such mesh is composed by general polyhedrons, $P$, that are star-shaped with respect to a ball of radius $r$.
We suppose that there exists a constant $\zeta$ such that
each face $F\in\partial P$ is star shaped with respect to a ball whose radius is bigger or equal to $\zeta h_P$,
where $h_P$ is the diameter of $P$.
We make a similar assumption for edges, i.e.,
we suppose that each edge of the skeleton of $P$ bigger or equal to $\zeta h_P$.
Starting from this assumptions we have the following relations
$h_P \lesssim h_F$ and $h_F \lesssim h_e$,
where $h_F$ is the diameter of a polyhedron face $F$, while
$h_e$ is the length of an edge $e$ of  polyhedron skeleton. \sout{Assumptions
related to curved faces will be present in the sequel}.\as{Based on these standard assumptions a suitable extension to the case of curved faces will be  presented in the next sections.}

\section{Model Problem}\label{sec:problem}

In this section, we introduce the mathematical model we are interested to solve.
We consider a three-dimensional domain $\Omega \subset \mathbb{R}^3$, whose
boundary $\partial \Omega$ is Lipschitz continuous with unit normal $\bm{n}$
pointing outward from $\Omega$. We stress the fact that the boundary, or a
portion of it, might be curved. To impose suitable boundary conditions, we
divide $\partial \Omega$ into two distinct parts: $\partial_e \Omega$ and
$\partial_n \Omega$, where in the former we set essential boundary conditions
and in the latter natural. We clearly have $\overline{\partial \Omega} =
\overline{\partial_e \Omega} \cup \overline{\partial_n \Omega}$ and
$\mathring{\partial_e \Omega} \cap \mathring{\partial_n \Omega} = \emptyset$.
Finally, for solvability issues we assume that $\mathring{\partial_n \Omega}
\neq \emptyset$.

The domain $\Omega$ represents a porous medium saturated by a single fluid (e.g.,
water). \as{The porous medium is characterized by} $\kappa$, the permeability tensor
assumed to be symmetric and positive defined, while the fluid is characterized
by a dynamic viscosity $\mu$ which is a strictly positive number. External
forces are the vector $\bm{g}$ and scalar $f$ sources,
$\bm{g}$ might represent a gravity term and $f$ an injection or extraction
well. On $\partial \Omega$ boundary conditions on the pressure $\overline{p}$ or
on the normal flux $\overline{q}$ might be imposed.

\as{Since we are considering a Darcy problem in mixed form} the unknowns are the Darcy velocity
$\bm{q}$ and the fluid pressure $p$, which are computed as in the following
problem.
\begin{problem}[Darcy problem - strong form]\label{pb:strong}
    Find $(\bm{q}, p)$ such that
    \begin{gather*}
        \left\{
        \begin{aligned}
            &\mu \bm{q} + \kappa \nabla p = \bm{g}\\
            &\diver \bm{q} = f
        \end{aligned}\right.
        \quad \text{in } \Omega
        \qquad
    \left\{
        \begin{aligned}
            &p = \overline{p} && \text{on } \partial_n \Omega\\
            &\bm{q} \cdot \bm{n} = \overline{q} && \text{on } \partial_e \Omega
        \end{aligned}
    \right.
    \end{gather*}
\end{problem}

To derive the weak form of Problem \ref{pb:strong}, we introduce the following
functional spaces for vector and scalar fields
\begin{gather*}
    \bm{V} \defeq H_{\partial_e \Omega}(\diver, \Omega)
    \quad \text{and} \quad
    Q \defeq L^2(\Omega).
\end{gather*}
We indicate the scalar products and induced norms of these spaces as: $(\cdot,
\cdot)_{\bm{V}}$ and $\Vert \cdot \Vert_{\bm{V}}$, and $(\cdot, \cdot)_Q$ and
$\Vert \cdot \Vert_Q$. By setting $\lambda = \mu \kappa^{-1}$, we introduce
the following forms
\begin{gather*}
    \begin{aligned}
        &a: \bm{V}\times\bm{V}\rightarrow\mathbb{R}
        &&a(\bm{u}, \bm{v}) \defeq (\lambda \bm{u}, \bm{v})_\Omega
        && \forall (\bm{u}, \bm{v}) \in \bm{V} \times \bm{V}\\
        &b: \bm{V} \times Q \rightarrow \mathbb{R}
        &&b(\bm{u}, v) \defeq - (\diver \bm{u}, v)_\Omega
        &&\forall (\bm{u}, v) \in \bm{V} \times Q
    \end{aligned}.
\end{gather*}
Moreover, we introduce the functionals as
\begin{gather*}
    \begin{aligned}
        &G: \bm{V} \rightarrow \mathbb{R}
        && G(\bm{v}) \defeq - (\overline{p}, \bm{v}\cdot\bm{n})_{\partial_n
        \Omega} + (\bm{g}, \bm{v})_\Omega
        && \forall \bm{v} \in \bm{V}\\
        &F: Q \rightarrow \mathbb{R}
        && F(v) \defeq -(f, v)_\Omega
        && \forall v \in Q
    \end{aligned}.
\end{gather*}
For the data, we assume in addition that $\kappa \in [L^\infty(\Omega)]^{3\times
3}$, $\mu \in L^2(\Omega)$, $\overline{p} \in H^{\frac{1}{2}}_{00}(\partial_n
\Omega)$, $\bm{g}\in[L^2(\Omega)]^3$, and $f \in L^2(\Omega)$. With this in
place, we can introduce the weak form of Problem  \ref{pb:strong} by assuming
$\overline{q}=0$.
\begin{problem}[Darcy problem - weak form]\label{pb:weak}
    Find $(\bm{q}, p) \in \bm{V}\times Q$ such that
    \begin{gather*}
        \left\{
        \begin{aligned}
            &a(\bm{q}, \bm{v}) + b(\bm{v}, p) = G(\bm{v})
            &&\forall \bm{v}\in \bm{V}\\
            &b(\bm{q}, v) = F(v)
            &&\forall v \in Q
        \end{aligned}\right..
    \end{gather*}
\end{problem}
Following, e.g., \cite{Boffi2013} it is possible to show that Problem
\ref{pb:weak} is well-posed.

\begin{remark}
    We have used an abuse in notation for the term $(\overline{p},
    \bm{v}\cdot\bm{n})_{\partial_n\Omega}$, in fact the fist entry is an element
    of $H^{\frac{1}{2}}_{00}(\partial_n\Omega)$ and the second of
    $H^{-\frac{1}{2}}(\partial_n\Omega)$, being the latter a trace of a
    $H(\diver, \Omega)$ function. To be precise, we should use a duality pairing
    between them and write $
        \langle \overline{p},
        \bm{v}\cdot\bm{n}\rangle_{H^{\frac{1}{2}}_{00}(\partial_n\Omega) \times H^{-\frac{1}{2}}(\partial_n\Omega)}
    $
    in place of the $L^2$-scalar product.
\end{remark}

\section{Virtual Element Spaces}\label{sec:vemSpaces}

To discretize Problem~\ref{pb:weak}, when the computational domain presents
curved boundaries or interfaces, we extend the discrete spaces proposed
in~\cite{BeiraodaVeiga2014b} via the idea in~\cite{BeiraodaVeiga2019}.
In Subsection \ref{subsec:approximation_spaces}, we give the motivations behind the proposed theory,
then we describe the local spaces, in Subsections
\ref{subsec:velocity_approximation} and \ref{subsec:pressure_approximation}
respectively, and the local operators
used to solve Problem~\ref{pb:weak} in Subsection \ref{subsec:problem_operators}.

\subsection{Approximation spaces}\label{subsec:approximation_spaces}

The idea behind the construction of the approximation spaces is simple and effective:
a virtual function restricted on a curved face is still a polynomial {but} in the parameter space.
The geometry is plugged in the virtual element space so
it is perfectly matched by any virtual function.
In~\cite{BeiraodaVeiga2019,Dassi2020,Dassi2021a} the authors give both the theoretical and numerical evidence about this nice property.

As it was done for the spaces in the aforementioned works,
our definition of the local functional spaces assumes that
there exists a sufficiently regular map $\gamma:\mathfrak{F}\subset[0,\,1]^2\to F$
that represents the geometry of the curved spaces.
$\widetilde{\P}_k({F})$ is given by
\begin{gather*}
    \widetilde{\P}_k({F}) := \{\widetilde{v}_h = v_h \circ \gamma^{-1}:
    v_h \in \P_k(\mathfrak{F})\}.
\end{gather*}
For simplicity, we assume that a curved face $F$ is a graph of a known regular
function. So, $\widetilde{\P}_k({F})$ is the space of polynomial of degree $k$ in the parameter space of the surface that defines the curved face.

The main difference with the classic approach is the characterization of the virtual function on the boundary of the element $E$.
Indeed, the virtual function is {always} a polynomial in the physical space for~\cite{BeiraodaVeiga2014b}.
Whereas, when a curved face is present in the current approach a function is still a polynomial in the physical space along straight edges
but it is a polynomial in the {parameter space} for curved boundary.

\begin{remark}
    A standard virtual element space used to solve a Laplacian problem has
    virtual functions on each polyhedron face~\cite{BeiraodaVeiga2014a,apollo}.
    Having Problem \ref{pb:weak} in mixed form avoids this difficulty.
\end{remark}

\subsection{Velocity approximation}\label{subsec:velocity_approximation}

Given a polyhedron $P$ that can have one or more curved faces,
we define the local space $\bm{\mathcal{V}}_h^k(P)$ as
\begin{align}\label{eqn:mixedCurved}
    \begin{aligned}
        \bm{\mathcal{V}}_h^k(P) :=  \big\{\vv_h\in H(\diver\,;\,P)\cap H(\curl\,;\,P)\::\:
        &\diver\vv_h\in\P_{k-1}(E),\\
        &\curl(\vv_h)\in[\P_{k-1}(E)]^3,\\
        &\vv_h\cdot\nn_F\in\P_k(F)\:\forall F\in\partial_{\mathcal{S}} P,\\
        &\vv_h\cdot\nn_F\in\widetilde{\P}_k({{F}})\:\forall F\in\partial_{\mathcal{C}} P\big\},
    \end{aligned}
\end{align}
where $\partial_{\mathcal{S}} P$ and $\partial_{\mathcal{C}} P$ denote the sets
of straight and curved faces, respectively.
Then, to have a discrete approximation of the velocity variable ${\bm q}$,
we use the global space
\begin{gather*}
    \bm{\mathcal{V}}_h^k(\Omega_h) := \{\vv_h\in H(\diver\,;\,\Omega_h)\cap H(\curl\,;\,\Omega_h)\::\:\vv_h|_P\in\bm{\mathcal{V}}_h^k(P)\:\forall P\in\Omega_h\}\,.
\end{gather*}
In each local space $\bm{\mathcal{V}}_h^k(P)$ we consider the following degrees of freedom
to uniquely identify a function $\vv_h \in \bm{\mathcal{V}}_h^k(P)$:
\begin{itemize}
 \item \textbf{normal face moments:}
 \begin{gather}\label{eqn:faceStMom}
    \frac{1}{|F|}\int_{F}(\vv_h \cdot\nn_F)\,m_k\dF\quad\forall m_k\in\P_k(F)\,,
 \end{gather}
 for each face $F\in\partial_{\mathcal{S}} P$, where $|F|$ is the area of the face $F$, and
 \begin{gather}\label{eqn:faceCvMom}
    \frac{1}{|\mathfrak{F}|}\int_{\mathfrak{F}}(\vv_h\cdot\nn_{\mathfrak{F}})\,{\mathfrak{m}_k}\ds\quad\forall {\mathfrak{m}}_k\in\widetilde{\P}_k({\mathfrak{F}})\,.
 \end{gather}
 for each face $F\in\partial_{\mathcal{C}} P$, where now $|\mathfrak{F}|$ is the area of the image of $F$ in the parameter space.
 \item \textbf{divergence moments:}
 \begin{gather*}
     \frac{h_P}{|P|}\int_P \diver(\vv_h)\, m_{k-1}\dP\,,\quad\forall m_{k-1}\in\P_{k-1}(P)\backslash\P_0(P)\,,
 \end{gather*}
 where $h_P$ and $|P|$ are the diameter and the volume of the polyhedron $P$.
 \item \textbf{internal cross moments:}
 \begin{gather*}
  \frac{1}{|P|}\int_P \vv_h\cdot(\mm_I\wedge\mm_{k-1})\dP\,,\quad\forall \mm_{k-1}\in[\P_{k-1}(P)]^3\,,
 \end{gather*}
 where $\mm_I:=(x,\,y,\,z)^\top$ and once again $|P|$ is the volume of the polyhedron.
\end{itemize}
A function in the global space $\bm{\mathcal{V}}_h^k(\Omega_h)$ is determined by
the union of such local d.o.f.

In the previous definitions we make the distinction between degrees of freedom associated with straight and curved faces
to highlight the presence of curved faces.
However, one can consider the degrees of freedom~\eqref{eqn:faceCvMom} for straight faces too.
Indeed, if we define a proper affine map
that represents the plane where the straight face lies on
so that we have the same types of degrees of freedom.
We can thus simplify \eqref{eqn:mixedCurved}
\begin{align*}
 \begin{aligned}
    \bm{\mathcal{V}}_h^k(P) := \big\{\vv\in H(\diver\,;\,P)\cap H(\curl\,;\,P)\::\:
    &\diver\vv_h\in\P_{k-1}(E),\\
    &\curl(\vv_h)\in[\P_{k-1}(E)]^3,\\
    &\vv_h\cdot \nn_F \in\widetilde{\P}_k({{F}})\:\forall F\in\partial P\big\}\,.
 \end{aligned}
\end{align*}
Such choice is useful from the computational point of view too.
Indeed, the only difference between straight and curved faces lies on the definition of the map $\gamma$.

\subsection{Pressure approximation}\label{subsec:pressure_approximation}

The global pressure variable $p$ is approximated via element-wise polynomials of degree $k-1$, i.e.,
\begin{gather*}
    Q_h(\Omega_h):=\left\{v\in L^2(\Omega_h):v|_P\in\P_{k-1}(P)\:\forall P\in\Omega_h\right\}\,.
\end{gather*}
To uniquely determine one polynomial on each polyhedron,
Given a polynomial $v \in \P_{k-1}(P)$,
its degrees of freedom are
\begin{itemize}
 \item \textbf{pressure moments:}
 \begin{gather*}
    \frac{1}{|P|}\int_P v \,m_{k-1}\dP\quad\forall m_{k-1}\in\P_{k-1}(P)\,.
 \end{gather*}
 \end{itemize}
A function of the global space $Q_h(\Omega_h)$ is identified by the union
of such local d.o.f.

\subsection{Problem operators}\label{subsec:problem_operators}

It is possible to
define a proper $L^2$ projection operator
${\bm\Pi}_k^0:\bm{\mathcal{V}}_h^k(P)\to [\P_k(P)]^3$, for $\vv \in
\bm{\mathcal{V}}_h^k(P)$
\begin{gather*}
    \int_P \left({\bm\Pi}_k^0\vv-\vv\right)\cdot\mm_k\dP = 0\quad\forall
    \mm_k\in[\P_k(P)]^3\,.
\end{gather*}
\af{Since $a(\cdot,\cdot)$ is not directly computable, we define its discrete
conterpart by the following decomposition
\begin{gather*}
    a_h(\vv,\, \ww) = a({\bm\Pi}_k^0 \vv,\, {\bm\Pi}_k^0 \ww) +
    \mathcal{S}^P({\bm T}_k^0\vv,\,{\bm T}_k^0\ww),
\end{gather*}
with the operator ${\bm T}_k^0 = {\bm I} - {\bm\Pi}_k^0$.}
The former \af{decomposition of $a(\cdot,\cdot)$} is
based on ${\bm\Pi}_k^0$ and a symmetric positive-definite bi-linear form
$\mathcal{S}^P$\af{, usually called stabilization}.
We choose
\begin{gather}\label{eqn::stab}
\mathcal{S}^P(\vv,\,\ww) := \|\nu\|_{L^{\infty}(P)}|P|\sum_{i=1}^{\#\texttt{dof}}\texttt{dof}_i\big(\vv\big)\texttt{dof}_i\big(\ww\big)\,,
\end{gather}
where $\#\texttt{dof}$ is the number of degrees of freedom associated with the polyhedron $P$ and
we define the function $\texttt{dof}_i$ that given a function in $\bm{\mathcal{V}}_h^k(P)$ returns its $i^{\text{th}}$ degrees of freedom.
The form $b(\cdot,\cdot)$ is directly computable given the chosen set of degrees of
freedom and it is exact since it involves only polynomials. The definition of such ingredients to have a virtual element version of Problem~\ref{pb:weak}
is deeply described in literature, see, e.g.,~\cite{Brezzi2014,Benedetto2017,BeiraodaVeiga2014b,Dassi2019a}.
The proposed scheme follows the same strategy and there are any further troubles.
As for the approach proposed in~\cite{BeiraodaVeiga2019},
the issue is related {only} on how make integration over curved domains.
\af{The discrete version of Problem \ref{pb:weak} is written as follow.
\begin{problem}[Darcy problem - discrete weak form]\label{pb:discrete_weak}
    Find $(\bm{q}, p) \in \bm{\mathcal{V}}_h^k(\Omega_h)\times Q_h(\Omega_h)$ such that
    \begin{gather*}
        \left\{
        \begin{aligned}
            &a_h(\bm{q}, \bm{v}) + b(\bm{v}, p) = G(\bm{v})
            &&\forall \bm{v}\in \bm{\mathcal{V}}_h^k(\Omega_h)\\
            &b(\bm{q}, v) = F(v)
            &&\forall v \in Q_h(\Omega_h)
        \end{aligned}\right..
    \end{gather*}
\end{problem}
}

\section{Stability of the scheme}\label{sec:stability}

In~\cite{Dassi2020} the authors already describe these theoretical aspects for the two dimensional case.
Similar arguments can be exploited for the three dimensional case too.
We limit ourself showing the three dimensional counterpart the stability of the
bilinear discrete operator $a_h^P(\cdot,\cdot)$\af{, restriction of
$a_h(\cdot,\cdot)$ to the element $P$}.
To achieve this goal,
we need the following additional results.

\begin{lemma}\label{lem:divEst}
    Let $\ww\in H(\diver,\,P)$ such that $\diver\ww\in\mathbb{P}_{k-1}(E)$ then
    \begin{gather*}
        \|\diver\ww\|_{0,P} \lesssim h_P^{-1} \|\ww\|_{0,P}\,,
    \end{gather*}
    where $h_P$ is the diameter of the polyhedron $P$, moreover for $\ww \in
    H(\curl, P)$ such that $\curl \ww \in [\mathbb{P}_{k-1}(E)]^3$ we have
    \begin{gather*}
        \|\curl\ww\|_{0,P} \lesssim h_P^{-1} \|\ww\|_{0,P}.
    \end{gather*}
\end{lemma}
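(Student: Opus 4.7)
The plan is to handle both inequalities by the same bubble-function / integration-by-parts trick, turning the inverse estimate into a test-function duality problem where the boundary term vanishes.

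For the divergence bound, set $p := \diver\ww \in \mathbb{P}_{k-1}(P)$ and pick a bubble function $b_P\in H^1_0(P)$ associated with the star-shaped polyhedron $P$, with the standard scaling properties $0\le b_P\lesssim 1$ and $\|\nabla b_P\|_{L^\infty(P)}\lesssim h_P^{-1}$, satisfying the norm-equivalence estimate $\int_P b_P\,q^2\,\mathrm{d}x \gtrsim \|q\|_{0,P}^2$ for every $q\in\mathbb{P}_{k-1}(P)$ (this is the standard bubble inverse estimate for polynomial spaces on star-shaped domains, see e.g.\ the VEM literature already cited). Test with $q := b_P p \in H^1_0(P)$:
\begin{gather*}
\|p\|_{0,P}^2 \;\lesssim\; \int_P b_P\,p^2\,\mathrm{d}x \;=\; (\diver\ww,\,b_P p)_P \;=\; -(\ww,\,\nabla(b_P p))_P,
\end{gather*}
where the boundary contribution from integration by parts vanishes thanks to $b_P|_{\partial P}=0$.

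Then I bound $\|\nabla(b_P p)\|_{0,P}$ using the product rule and the polynomial inverse inequality $\|\nabla p\|_{0,P}\lesssim h_P^{-1}\|p\|_{0,P}$, together with the scaling of $b_P$, to get $\|\nabla(b_P p)\|_{0,P}\lesssim h_P^{-1}\|p\|_{0,P}$. Cauchy–Schwarz on the right-hand side then yields $\|p\|_{0,P}^2 \lesssim h_P^{-1}\|\ww\|_{0,P}\|p\|_{0,P}$, and dividing by $\|p\|_{0,P}$ gives the stated bound.

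The curl estimate follows by the exact same recipe, replacing the scalar $p$ by the vector $\bm{r}:=\curl\ww\in[\mathbb{P}_{k-1}(P)]^3$ and testing with $\bm{q}:=b_P\bm{r}\in [H^1_0(P)]^3$; since $\bm{q}$ vanishes on $\partial P$, the vector integration by parts yields $(\curl\ww,\bm{q})_P=(\ww,\curl\bm{q})_P$ with no boundary term, and the same inverse-inequality/bubble-scaling argument gives $\|\curl\bm{q}\|_{0,P}\lesssim h_P^{-1}\|\bm{r}\|_{0,P}$.

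The main obstacle is the bubble ingredient: namely, the existence on a general star-shaped polyhedron $P$ of a function $b_P$ satisfying simultaneously the $L^\infty$ bounds on $b_P$ and $\nabla b_P$ (scaling like $h_P^{-1}$) and the weighted polynomial equivalence $\int_P b_P q^2 \gtrsim \|q\|_{0,P}^2$ on $\mathbb{P}_{k-1}(P)$. Under the mesh regularity assumptions of Section \ref{sec:notations} this is classical (product of scaled distance functions to faces, or the sub-tetrahedrization bubble), so I would invoke it rather than reprove it, and the remainder of the argument is just the two lines of computation sketched above.
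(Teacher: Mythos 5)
Your proposal is correct and follows essentially the same route as the paper: both arguments test $\diver\ww$ (resp. $\curl\ww$) against a bubble-weighted copy of itself, integrate by parts with the boundary term killed by the bubble, and conclude via the polynomial inverse inequality and Cauchy--Schwarz; the only cosmetic difference is that the paper realizes the bubble explicitly as a quartic on an auxiliary tetrahedron associated with the star-shaped element, whereas you invoke a generic $H^1_0(P)$ bubble with the standard scaling and weighted norm-equivalence properties. Your treatment of the curl case is in fact slightly more explicit than the paper's, which simply states that it ``follows the same strategy.''
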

\begin{proof}
    Since the polyhedron $P$ is star-shaped there exists a sphere $B_P$ that contains $P$ and
    there exists also a regular tetrahedron, $T_P$, inscribed in the sphere $B_P$, such that
    for any $p_k\in\P_k(P)$ we have
    $\|p_k\|_{0,P}\lesssim \|p_k\|_{0,T_P}$.
    Let $b_4\in\P_4(T_P)$ be the bubble such that $\|b_4\|_{L^\infty(T_P)}<1$ and
    that is identically zero on $\partial P$.
    Then, since the divergence of any function in $\ww\in\bm{\mathcal{V}}_h^k(P)$  is a polynomial of degree $k-1$,
    we have
    \begin{align*}
        \|\diver\ww\|_{0,P}^2 &\lesssim \|\diver\ww\|_{0,T_P}^2                                 && \\
        &\lesssim \int_P b_4\,\diver\ww\,\diver\ww\dP                                           &&(b_4\text{ is limited})\\[0.5em]
        &\lesssim -\int_P \nabla(b_4\,\diver\ww)\cdot\ww\dP                                       &&(\text{integration by parts})\\[0.5em]
        &\lesssim \|\nabla(b_4\,\diver\ww)\|_{0,T_P}\,\|\ww\|_{0,T_P}                             &&(\text{H\"older})\\[0.5em]
        &\lesssim h_P^{-1}\|b_4\,\diver\ww\|_{0,T_P}\,\|\ww\|_{0,T_P}                             &&(H^1\text{ inverse inequality})\\[0.5em]
        &\lesssim h_P^{-1}\|b_4\|_{L^\infty(T_P)}\,\|\diver\ww\|_{0,T_P}\,\|\ww\|_{0,T_P}         &&(b_4\text{ is limited})\\[0.5em]
        &\lesssim h_P^{-1}\,\|\diver\ww\|_{0,T_P}\,\|\ww\|_{0,T_P}                                &&(\text{mesh assumption})\\[0.5em]
        &\lesssim h_P^{-1}\,\|\diver\ww\|_{0,P}\,\|\ww\|_{0,P}\,.
    \end{align*}
    Looking at the beginning and at the end of the previous chain of inequalities,
    we have the proof. For bound on the $\curl$ follows the same strategy.
\end{proof}

\begin{lemma}\label{lem:coef}
    A polynomial $g\in \P_k(P)$, of degree $k$,
    can be written as
    \begin{gather}\label{eqn:monoDeco}
        g(\xx) = \sum_{r=1}^{\pi_k} g_r\,m_r(\xx)\,,
    \end{gather}
    where $\{m_r\}_{r=1}^{\pi_k}$ are so-called scaled monomials of degree lower
    or equal to $k$.
    Such set is a basis of polynomials of degree $k$ and then the following inequalities hold
    \begin{gather}\label{eqn:coeff}
        h_P^3\|\gg\|_{l^2}^2\lesssim \|g\|^2_{0,P}\lesssim h_P^3\|\gg\|_{l^2}^2\,,
    \end{gather}
    where $\gg\in\R^{\pi_k}$ is the vector of monomials' coefficients in~\eqref{eqn:monoDeco},
    $\pi_k$ is the dimension of the space $\P_k(P)$ and
    $\|\cdot\|_{l^2}$ is the standard Euclidean norm.
\end{lemma}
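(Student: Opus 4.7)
The plan is to prove this equivalence by a scaling argument together with the standard finite-dimensional norm equivalence on a reference domain, using the mesh regularity assumptions to ensure the constants are uniform in $P$.

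First I would make explicit the structure of the scaled monomials: each $m_r$ has the form $m_r(\xx) = \big((\xx - \xx_P)/h_P\big)^{\alpha_r}$ for some multi-index $\alpha_r$ with $|\alpha_r|\leq k$, where $\xx_P$ is a chosen internal point of $P$ (e.g., the centroid, or the center of the star-shaped ball of radius $r$). Under the affine change of variables $\hat\xx = (\xx-\xx_P)/h_P$, the element $P$ is mapped to a reference element $\widehat{P}$ with $\mathrm{diam}(\widehat{P}) \leq 1$, and by the mesh assumptions $\widehat{P}$ contains a ball $\widehat{B}$ of radius $\rho \gtrsim 1$ (independent of $h_P$). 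The monomials $m_r$ pull back exactly to the standard monomials $\hat m_r(\hat\xx)=\hat\xx^{\alpha_r}$, and the Jacobian contributes a factor $h_P^3$, so that
\begin{gather*}
\|g\|_{0,P}^2 \;=\; h_P^3\,\|\hat g\|_{0,\widehat{P}}^2, \qquad \text{where } \hat g(\hat\xx) = \sum_{r=1}^{\pi_k} g_r\,\hat m_r(\hat\xx).
\end{gather*}

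Next, I would reduce the problem to proving the scale-free equivalence $\|\hat g\|_{0,\widehat{P}}^2 \simeq \|\gg\|_{l^2}^2$ with constants depending only on $k$ and the shape-regularity parameter. The lower bound $\|\hat g\|_{0,\widehat{P}}^2 \gtrsim \|\gg\|_{l^2}^2$ follows from $\|\hat g\|_{0,\widehat{P}}^2 \geq \|\hat g\|_{0,\widehat{B}}^2$ combined with the fact that on any fixed ball of radius $\gtrsim 1$ the maps $\gg \mapsto \|\hat g\|_{0,\widehat{B}}$ and $\gg \mapsto \|\gg\|_{l^2}$ are two norms on the finite-dimensional space $\R^{\pi_k}$, hence equivalent (a Lebesgue-type argument shows the constant depends continuously and monotonically on the ball radius, so uniformity follows from $\rho \gtrsim 1$). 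The upper bound $\|\hat g\|_{0,\widehat{P}}^2 \lesssim \|\gg\|_{l^2}^2$ follows similarly from $\|\hat g\|_{0,\widehat{P}} \leq \|\hat g\|_{0,B_1}$, where $B_1$ is the unit ball centered at the origin (which contains $\widehat{P}$ since $\mathrm{diam}\,\widehat{P}\leq 1$), and again finite-dimensional norm equivalence on $B_1$ with an absolute constant.

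Combining these two estimates with the scaling identity yields
\begin{gather*}
h_P^3\,\|\gg\|_{l^2}^2 \;\lesssim\; h_P^3\,\|\hat g\|_{0,\widehat{P}}^2 \;=\; \|g\|_{0,P}^2 \;\lesssim\; h_P^3\,\|\gg\|_{l^2}^2,
\end{gather*}
which is exactly \eqref{eqn:coeff}. The only subtle point, and the one I expect to be the main obstacle, is ensuring that the equivalence constants on the reference configuration do not depend on the particular geometry of $\widehat{P}$; this is resolved by sandwiching $\widehat{P}$ between a ball of radius $\gtrsim 1$ (from the star-shapedness assumption with respect to a ball of radius $r \gtrsim h_P$) and the unit ball, both of which give $k$-dependent but shape-independent constants for the norm equivalence on $\P_k$.
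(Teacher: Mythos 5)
Your argument is correct and follows essentially the same route as the paper: an affine rescaling $\hat\xx=(\xx-\xx_P)/h_P$ producing the $h_P^3$ Jacobian factor, a ball guaranteed by star-shapedness for the lower bound, and finite-dimensional norm equivalence (which the paper phrases as a minimum-eigenvalue bound for the Gram matrix of the scaled monomials) with constants controlled by the shape-regularity parameters. The only cosmetic differences are that the paper proves the upper bound by expanding $g$ term by term and using $\|m_r\|_{L^\infty(P)}\lesssim 1$ rather than norm equivalence on the unit ball, and obtains the lower bound by summing over spheres inscribed in a sub-tetrahedralization instead of using a single inscribed ball; neither changes the substance.
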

\begin{proof}
    The proof of this lemma is an extension to the three dimensional case of the one presented in~\cite{chen2018some}.
    Let us prove the second part of \eqref{eqn:coeff}.
    \begin{align*}
        \|g\|_{0,P}^2 &:= \int_P g^2\dP = \int_P\left(\sum_{r=1}^{\pi_k}g_r\,m_r\right)^2\dP                           &&(\text{definitions}) \\
                      &\leq \sum_{r=1}^{\pi_k} \int_P g_r^2\,m_r^2\dP = \sum_{r=1}^{\pi_k} g_r^2 \int_P \,m_r^2\dP     &&(\text{integral properties})\\
                      &\lesssim \sum_{r=1}^{\pi_k}
                      g_r^2\,h_P^3\,\|m_r^2\|_{L^\infty(P)}  \lesssim
                      h_P^3\|\gg\|_{l^2}^2
                      &&(m_r\text{ is limited.})
    \end{align*}
    Let us move to the first part. Thanks to mesh assumptions, we can decompose a polyhedron $P$ in tetrahedrons, $\mathcal{T}_P$. Given one of these tetrahedrons, $\tau$, we can choose a sphere $S_\tau$ such that its radius satifies $r_\tau=\delta_\tau h_P$, where $\delta_\tau\in (0,1)$.
    Now we can apply the affine map
    \begin{equation}
    \widetilde{\xx} = \frac{\xx-\xx_P}{h_P}\,,
    \label{eqn:map}
    \end{equation}
    where $\xx_P$ is the barycenter of $P$. By construction, such map transforms $S_\tau$ in a sphere of radius $\delta_\tau$ and whose center is inside the unit sphere centrend in the origin, $\widetilde{S}_\tau$.
    Since $S_\tau\subset P$ we have
    \begin{equation}
    \|g\|_{0,P}\geq \|g\|_{0,S_\tau} =  \|\widetilde{g}\|_{0,\widetilde{S}_\tau}\,(h_P)^{3/2}\,,
    \label{eqn:firstPartCoeff}
    \end{equation}
    where $\widetilde{g}$ is the transformation of $g$ by the map of \eqref{eqn:map}.
    The last term becomes
    $$
    \|\widetilde{g}\|_{0,\widetilde{S}_\tau}^2 = \gg^t \widetilde{M}^\tau \gg\,,
    $$
    where $\widetilde{M}$ is a matrix whose entries are
    $$
    \widetilde{M}_{i,j}^\tau = \int_{\widetilde{S}_\tau} \widetilde{m}_i\,\widetilde{m}_j~\text{d}\widetilde{\xx}\,.
    $$
    The matrix $\widetilde{M}^\tau$ is a depends on the position of the center
    $\bm{c}$ of the sphere $S_\tau$, then
    $$
    \sum_{\tau\in\mathcal{T}_P}\|\widetilde{g}\|^2_{0,\widetilde{S}_\tau}\geq \lambda^* \|\gg\|_{l^2}^2\,,
    $$
    where $\lambda^*$ is the minimum eigenvalue among all transformations $M^\tau$. We underline that $\lambda^*$ does not depend on the mesh size $h_P$ but {only} on the radii $\delta_\tau$ so we can write
    \begin{equation}
    \sum_{\tau\in\mathcal{T}_P}\|\widetilde{g}\|^2_{0,\widetilde{S}_\tau}\gtrsim \|\gg\|_{l^2}^2\,
    \label{eqn:secondPartCoeff}
    \end{equation}
    Now, squaring \eqref{eqn:firstPartCoeff} and exploitng the estimate of
    \eqref{eqn:secondPartCoeff}, we have the desired estimate.
\end{proof}
\begin{lemma}\label{lem:lemEstOk}
    Given a polyhedron $P$ and a function $\ww\in\bm{\mathcal{V}}_h^k(P)$ it
    holds
    \begin{align}
        \big(\ww\cdot\nn\,,\,\mathfrak{m}_k\big)_{\mathfrak{F}} &\lesssim h_P^{1/2} \big(\mathcal{S}^P(\ww,\,\ww)\big)^{1/2} &&\text{if }\deg(\mathfrak{m}_k)\leq k\,,\label{eqn:primaL1}\\
        \big(\diver\ww\,,\,m_{k-1}\big)_{P} &\lesssim h_P^{1/2} \big(\mathcal{S}^P(\ww,\,\ww)\big)^{1/2} &&\text{if }\deg(m_{k-1})\leq k-1\,,\label{eqn:secondaL1}\\
        \big(\ww\,,\,\mm_I\wedge\mm_{k-1}\big)_{P} &\lesssim h_P^{1/2} \big(\mathcal{S}^P(\ww,\,\ww)\big)^{1/2} &&\text{if }\deg(\mm_{k-1})\leq k-1\label{eqn:terzaL1}\,.
    \end{align}
\end{lemma}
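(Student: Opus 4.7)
The plan is to prove the three bounds by one uniform template: rewrite each left-hand side as a linear combination of the degrees of freedom of $\ww$, apply Cauchy--Schwarz, control the coefficient vector by Lemma~\ref{lem:coef}, and control the d.o.f.\ vector by the very definition of the stabilization. The key starting observation, read off directly from \eqref{eqn::stab}, is that $|\texttt{dof}_i(\ww)|^2 \leq \mathcal{S}^P(\ww,\ww)/(\|\nu\|_{L^\infty(P)}|P|)$ for every $i$, so the $\ell^2$ norm of any finite collection of d.o.f.\ values is controlled by $|P|^{-1/2}\,\mathcal{S}^P(\ww,\ww)^{1/2} \lesssim h_P^{-3/2}\,\mathcal{S}^P(\ww,\ww)^{1/2}$.

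For \eqref{eqn:primaL1} I would expand $\mathfrak{m}_k$ in the scaled monomial basis of $\widetilde{\P}_k(\mathfrak{F})$, $\mathfrak{m}_k=\sum_r g_r\,\mathfrak{m}_r$; by \eqref{eqn:faceCvMom} this rewrites the integral as $|\mathfrak{F}|\sum_r g_r\,\texttt{dof}_r^F(\ww)$. Cauchy--Schwarz then gives the product $|\mathfrak{F}|\,\|\gg\|_{\ell^2}\,\|\texttt{dof}^F(\ww)\|_{\ell^2}$; the face analogue of Lemma~\ref{lem:coef} (the proof carries over verbatim to the two-dimensional parameter domain $\mathfrak{F}$) controls $\|\gg\|_{\ell^2}$, and the observation above controls the d.o.f.\ norm. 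Plugging in $|\mathfrak{F}|\lesssim h_P^2$ and $|P|\sim h_P^3$ produces the announced $h_P^{1/2}$. For \eqref{eqn:secondaL1} the same strategy applies with $m_{k-1}$ expanded over the scaled monomials of $\P_{k-1}(P)$ and the divergence d.o.f.; because the latter carry the prefactor $h_P/|P|$, each non-constant contribution $\int_P \diver\ww\,m_r\dP$ equals $(|P|/h_P)\,\texttt{dof}_r^{\rm div}(\ww)$, and this extra $h_P^{-1}$ is precisely what downgrades the natural $h_P^{3/2}$ to $h_P^{1/2}$. The constant mode of $m_{k-1}$ is absent from the divergence d.o.f.\ set, so I would treat it separately via the divergence theorem $\int_P\diver\ww\dP=\sum_{F\in\partial P}\int_F\ww\cdot\nn\dF$, which reduces to a finite sum of instances of \eqref{eqn:primaL1} already established.

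For \eqref{eqn:terzaL1} I would expand the vector polynomial $\mm_{k-1}$ in a vector scaled-monomial basis so that each term $\int_P \ww\cdot(\mm_I\wedge \mm_{k-1,r})\dP$ matches $|P|\,\texttt{dof}_r^{\rm cross}(\ww)$, and then re-run the Cauchy--Schwarz + Lemma~\ref{lem:coef} + stabilization argument. The main hurdles are accounting-oriented rather than conceptual: one must keep the scalings of $|F|$, $|\mathfrak{F}|$, $|P|$ and of the three families of d.o.f.\ straight so that the prefactors $1/|\mathfrak{F}|$, $h_P/|P|$ and $1/|P|$ appearing in the d.o.f.\ definitions combine in each case to produce exactly the power $h_P^{1/2}$. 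Providing the two-dimensional analogue of Lemma~\ref{lem:coef} needed in \eqref{eqn:primaL1} and reducing the $\P_0(P)$ mode in \eqref{eqn:secondaL1} to the face estimate are the only technical side points; everything else is a routine application of the template.
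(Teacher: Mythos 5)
Your proposal is correct and follows essentially the same route as the paper: in both arguments each left-hand side is recognized (up to the prefactors $|\mathfrak{F}|$, $|P|/h_P$, $|P|$) as a degree-of-freedom value of $\ww$, bounded by the full stabilization $\mathcal{S}^P(\ww,\ww)$, with the mesh scalings $|\mathfrak{F}|\lesssim h_P^2$ and $|P|\sim h_P^3$ producing the factor $h_P^{1/2}$, and with the constant mode of \eqref{eqn:secondaL1} handled separately via the divergence theorem and \eqref{eqn:primaL1}. The only cosmetic difference is that you expand the test monomial in the scaled-monomial basis and invoke Cauchy--Schwarz plus a face analogue of Lemma~\ref{lem:coef}, whereas the paper takes $\mathfrak{m}_k$, $m_{k-1}$, $\mm_{k-1}$ to be single basis monomials so that the left-hand side is directly one degree of freedom.
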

\begin{proof}
    We make the proof for each of the previous inequalities,
    starting from~\eqref{eqn:primaL1}.
    If $\deg(\mathfrak{m}_k) \leq k$ there exists a degrees of freedom associate with such monomial and the face $\mathfrak{F}$,
    we call it $\texttt{D1}_{\underline{i}}(\ww)$
    that
    \begin{align*}
       \big(\ww\cdot\nn\,,\,\mathfrak{m}_k\big)_{\mathfrak{F}}^2  &= |\mathfrak{F}|^2\,\texttt{D1}^2_{\underline{i}}(\ww)       &&(\text{definition})\\
        &\lesssim h_P^4\,\texttt{D1}^2_{\underline{i}}(\ww)\lesssim h_P\,|P|\,\texttt{D1}^2_{\underline{i}}(\ww)          &&(\text{mesh assumptions})\\
        &\lesssim h_P\,\mathcal{S}^P(\ww,\,\ww)                                                                           &&(\text{add the other terms}).
    \end{align*}
    Then, making the square root of both sides we get \eqref{eqn:primaL1}.

    \medskip For~\eqref{eqn:secondaL1} we distinguish two cases:
    first we consider the case where $m_{k-1}=1$, then  $1<\deg(m_{k-1})\leq k-1$.
    In the former case we proceed as
    \begin{align*}
        \big(\diver\ww\,,1\big)_{P}^2 &= \sum_{F\in\partial P} \|(\ww\cdot\nn)\|^2_{1,F}  &&(\text{divergence theorem}) \\
        &\lesssim h_P\,\mathcal{S}^P(\ww,\,\ww)
        &&(\text{use \eqref{eqn:primaL1}}).
    \end{align*}
    As before, by making the square root of both sides we get \eqref{eqn:secondaL1} when we consider the constant polynomial.
    Then, if $m_{k-1}$ is not the constant monomial,
    we know that there exists a degrees of freedom associated with such monomial,
    we call it $\texttt{D2}_{\underline{i}}(\ww)$
    \begin{align*}
        \big(\diver\ww\,,\,m_{k-1}\big)_{P}^2 &= \frac{|P|^2}{h_P^2}\,\texttt{D2}^2_{\underline{i}}(\ww) &&(\text{definition})\\
        &\lesssim h_P^4\,\texttt{D2}^2_{\underline{i}}(\ww)\lesssim h_P|P|\,\texttt{D2}^2_{\underline{i}}(\ww) &&(\text{mesh assumptions})\\
        &\lesssim h_P\,\mathcal{S}^P(\ww,\,\ww)                                                               &&(\text{add the other terms}).
    \end{align*}
    Finally, making the square root of both sides we got the relation in \eqref{eqn:secondaL1} for any momomials whose degree is greater than 0 and lower than~$k$.

    \medskip Also for~\eqref{eqn:terzaL1} we know that there exists a degrees of freedom associated with the monomial $\mm_{k-1}$ or
    at least a linear combination of such vectorial monomials that equals to $\mm_{k-1}$.
    We prove the former case, the latter is straightforward.
    \begin{align*}
        \big(\ww\,,\,\mm_I\wedge\mm_{k-1}\big)_{P}^2 &= |P|^2\,\texttt{D3}^2_{\underline{i}}(\ww) &&(\text{definition})\\
        &\lesssim h_P\,\mathcal{S}^P(\ww,\,\ww)                                            &&(\text{add the other terms}).
    \end{align*}
    We obtain the estimate of \eqref{eqn:terzaL1} by
    making the square root of both sides.
\end{proof}

\begin{prop}
    The 
    form $\mathcal{S}^P$ defined in \eqref{eqn::stab} satisfies the following property
    \begin{gather}\label{eqn:estUp}
    \mathcal{S}^P(\ww,\,\ww) \lesssim \|\ww\|^2_{0,P}\quad\forall\ww\in\bm{\mathcal{V}}_h^k(P)\,.
    \end{gather}
\label{prop:estUp}
\end{prop}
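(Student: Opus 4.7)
The plan is to expand $\mathcal{S}^P(\ww,\ww)$ according to the three families of degrees of freedom (face moments on $\partial_{\mathcal{S}}P\cup\partial_{\mathcal{C}}P$, divergence moments, and internal cross moments) and to show that, for each DOF functional, $|P|\cdot|\texttt{dof}_i(\ww)|^2\lesssim\|\ww\|_{0,P}^2$. Since the number of DOFs per element depends only on $k$ and on the mesh regularity constants, summing these bounds together with the $\|\nu\|_{L^\infty(P)}$ prefactor yields~\eqref{eqn:estUp}.

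The easy cases are the internal moments. For the cross moments, I apply Cauchy--Schwarz and exploit that $\mm_I\wedge\mm_{k-1}$ is bounded pointwise by $\lesssim h_P$ on $P$ (using scaled monomials and the fact that $|\xx-\xx_P|\lesssim h_P$ inside a star-shaped element), yielding $|P|\cdot\texttt{dof}^2\lesssim h_P^2\|\ww\|_{0,P}^2\lesssim \|\ww\|_{0,P}^2$. For the divergence moments, Cauchy--Schwarz combined with Lemma~\ref{lem:divEst} gives
\begin{gather*}
\frac{h_P}{|P|}\int_P \diver(\ww)\,m_{k-1}\dP \;\lesssim\; \frac{h_P}{|P|}\,\|\diver\ww\|_{0,P}\,|P|^{1/2} \;\lesssim\; |P|^{-1/2}\|\ww\|_{0,P},
\end{gather*}
so $|P|\cdot\texttt{dof}^2\lesssim\|\ww\|_{0,P}^2$.

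The main obstacle is the face-moment estimate: after Cauchy--Schwarz on $F$ (or on $\mathfrak{F}$) one obtains $|P|\cdot\texttt{dof}^2\lesssim h_P\,\|\ww\cdot\nn\|_{0,F}^2$, so the crucial ingredient is a trace-type inequality $\|\ww\cdot\nn\|_{0,F}^2\lesssim h_P^{-1}\|\ww\|_{0,P}^2$ for $\ww\in\bm{\mathcal{V}}_h^k(P)$. I would prove this in the spirit of Lemma~\ref{lem:divEst}: since $p_F:=\ww\cdot\nn|_F$ lies in the finite-dimensional space $\P_k(F)$ (resp.\ $\widetilde{\P}_k(F)$ after pull-back through $\gamma$), pick a face-bubble $b_F\in H_0^1(P)$ supported in a sub-tetrahedron touching $F$ and let $\widetilde p_F$ be a polynomial extension of $p_F$ into $P$. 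Using $b_F\widetilde p_F\in H_0^1(P)$ vanishing on $\partial P\setminus F$, the divergence theorem gives
\begin{gather*}
\int_F b_F\,p_F^2\dF \;=\; \int_P (b_F\widetilde p_F)\diver\ww\dP + \int_P \nabla(b_F\widetilde p_F)\cdot\ww\dP.
\end{gather*}
Applying Cauchy--Schwarz, the $H^1$ inverse inequality $\|\nabla(b_F\widetilde p_F)\|_{0,P}\lesssim h_P^{-1}\|b_F\widetilde p_F\|_{0,P}$, a standard scaling $\|b_F\widetilde p_F\|_{0,P}\lesssim h_P^{1/2}\|p_F\|_{0,F}$, and Lemma~\ref{lem:divEst}, I get $\|p_F\|_{0,F}^2\lesssim h_P^{-1/2}\|p_F\|_{0,F}\|\ww\|_{0,P}$, i.e.\ the desired $h_P^{-1/2}$ trace bound.

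For curved faces, the same argument is performed after pulling $p_F$ back to $\mathfrak{F}\subset[0,1]^2$ via $\gamma$: by the mesh/regularity assumptions on the parameterization, the surface measure $\ds$ and the Euclidean measure $\dF$ are equivalent up to constants independent of $h$, and the bubble is built in a sub-tetrahedron attached to $F$ in the physical domain, so no additional geometric obstruction arises. I expect the technical subtleties to lie precisely in this extension/bubble construction in the curved case, because one must verify that the scaling constants in the inverse inequality and the mass bound $\|b_F\widetilde p_F\|_{0,P}\lesssim h_P^{1/2}\|p_F\|_{0,F}$ remain independent of the curvature of $\gamma$ under the mesh assumptions in Section~\ref{sec:notations}.
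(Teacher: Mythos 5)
Your proof is correct and takes essentially the same route as the paper: the same splitting of $\mathcal{S}^P$ by DOF family, Cauchy--Schwarz with scaled-monomial bounds for each term, Lemma~\ref{lem:divEst} for the divergence moments, and the scaled trace inequality $\|\ww\cdot\nn\|^2_{0,\partial P}\lesssim h_P^{-1}\|\ww\|^2_{0,P}+h_P\|\diver\ww\|^2_{0,P}$ for the face moments, which the paper imports from Proposition~4.3 of~\cite{Dassi2020} while you instead sketch the face-bubble argument explicitly. The only slip is declaring the face bubble in $H^1_0(P)$, which would make $\int_F b_F p_F^2\dF$ vanish; it should vanish on $\partial P\setminus F$ but be positive in the interior of $F$, as your subsequent clause in fact assumes.
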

\begin{proof}
$\mathcal{S}^P$ can be split into three parts associated with the three types of dofs,
\begin{align}\label{eqn:eachPart}
    \begin{aligned}
    \mathcal{S}^P(\ww,\,\ww) &:=   \|\nu\|_{L^{\infty}(P)}|P|\sum_{i=1}^{\#\texttt{D1}} \texttt{D1}_i^2\big(\ww\big)  &&(\text{normal face moments})\\
                             &+    \|\nu\|_{L^{\infty}(P)}|P|\sum_{i=1}^{\#\texttt{D2}} \texttt{D2}_i^2\big(\ww\big)  &&(\text{divergence moments})\\
                             &+    \|\nu\|_{L^{\infty}(P)}|P|\sum_{i=1}^{\#\texttt{D3}} \texttt{D3}_i^2\big(\ww\big)  &&(\text{internal cross moments}),
    \end{aligned}
\end{align}
where $\#\texttt{D1}$, $\#\texttt{D2}$ and $\#\texttt{D3}$ are the number of each type of dofs associated with each the polyhedron $P$.
Without loss of generality we consider the case $\nu\equiv1$.
To prove the relation in \eqref{eqn:estUp},
we show that the estimate hold for each terms of \eqref{eqn:eachPart}.
Then, \eqref{eqn:estUp} is a direct consequence of these relations.

\paragraph{Normal face moments}
We follow the remark made at the end of the ``velocity local space'' paragraph and we always consider a map $\gamma$ although the face is straight.
We fix a generic face dofs and
we prove the bound in \eqref{eqn:boundLocal} for this particular case.
\begin{align*}
|P|\,\texttt{D1}_i^2\big(\ww\big) &=
|P|\left(\frac{1}{|\mathfrak{F}|}\int_{\mathfrak{F}}(\ww\cdot\nn_F)\,{\mathfrak{m}_k}\ds\right)^2     &&(\text{definition})\\[0.5em]
&\lesssim \frac{|P|}{|\mathfrak{F}|^2}\,\|\ww\cdot\nn_F\|^2_{0,\mathfrak{F}}\, \|{\mathfrak{m}_k}\|^2_{0,\mathfrak{F}}                     &&(\text{Cauchy-Schwarz})\\[0.5em]
&\lesssim \frac{|P|}{|\mathfrak{F}|^2}\,\|\ww\cdot\nn_F\|^2_{0,\mathfrak{F}}\, |\mathfrak{F}|
\lesssim \frac{|P|}{|\mathfrak{F}|}\,\|\ww\cdot\nn_F\|^2_{0,\mathfrak{F}}                                                                  &&(\text{scaled monomials})\\[0.5em]
&\lesssim \frac{h_P^3}{h_P^2} \|\ww\cdot\nn_F\|^2_{0,F}\lesssim h_P\|\ww\cdot\nn_F\|^2_{0,F}                                                                  &&(\text{mesh assumpions})\\[0.5em]
&\lesssim h_P\|\ww\cdot\nn\|^2_{0,\partial P}
\end{align*}
Now following the same argumets proposed in the proof of Proposition~4.3
in~\cite{Dassi2020} and Lemma~\ref{lem:divEst},
we have
$$
h_P\|\ww\cdot\nn\|^2_{0,\partial P} \lesssim \|\ww\|_{0,P}^2 + h_P^2 \|\diver\ww\|_{0,P}^2 \lesssim \|\ww\|_{0,P}^2\,,
$$
and this complete the proof of the bound for a generic normal face moments.

\paragraph{Divergence moments}
Before dealing with the prove of this part,
we recall that the monomials involved on such degrees of freedom have degree greater than zero, i.e.,
there is not the constant monomial.
We fix a generic dof of this type and
then we prove the desired estimate.
\begin{align*}
|P|\,\texttt{D2}_i^2\big(\ww\big) &:=
|P|\left(\frac{h_P}{|P|}\int_P\diver(\ww)\,m_k \dP \right)^2                                            &&(\text{definition})\\[0.5em]
&= \frac{h^2_P}{|P|}\left(\int_P\diver(\ww)\,m_k\dP \right)^2                                                                                 &&\\[0.5em]
&\lesssim \frac{1}{h_P}\left(\int_P\diver(\ww)\,m_k\dP \right)^2                                                                              &&(\text{mesh assumpion})\\[0.5em]
&\lesssim \frac{1}{h_P}\|\diver\ww\|^2_{0,P}\,\|m_k\|^2_{0,P}                                                                           &&(\text{H\"older})\\[0.5em]
&\lesssim \frac{1}{h_P}\|\diver\ww\|^2_{0,P}\,|P|                                                                                       &&(m_k\text{ is limited})\\[0.5em]
&\lesssim \frac{1}{h_P^3}\|\ww\|^2_{0,P}\,|P|                                                                                             &&(\text{Lemma~\ref{lem:divEst}})\\[0.5em]
&\lesssim \frac{1}{h_P^3}\|\ww\|^2_{0,P}\,h_P^3 \lesssim \|\ww\|^2_{0,P}                                                                  &&(\text{mesh assumption})
\end{align*}

\paragraph{Internal cross moments}
We fix a generic dof of this type and get
\begin{align*}
|P|\,\texttt{D3}_i^2\big(\ww\big) &:= |P|\left(\frac{1}{|P|}\int_P
\ww\cdot(\mm_I\wedge\mm_{k-1})\dP\right)^2   &&(\text{definition})\\[0.5em]
&=\frac{1}{|P|}\left(\int_P \ww\cdot(\mm_I\wedge\mm_{k-1})\dP\right)^2                                           &&\\[0.5em]
&\lesssim \frac{1}{|P|} \|\ww\|^2_{0,P}\,\|\mm_I\wedge\mm_{k-1}\|^2_{0,P}                                     &&(\text{H\"older})\\[0.5em]
&\lesssim \frac{1}{|P|} \|\ww\|^2_{0,P}\,|P|                                                                  &&(\mm_I\wedge\mm_{k-1}\text{ is limited})\\[0.5em]
&\lesssim \frac{1}{h_P^3} \|\ww\|^2_{0,P}\,h_P^3 =  \|\ww\|^2_{0,P}                                           &&(\text{mesh assumption})
\end{align*}

\noindent We prove the estiamte for each kind of degrees of freedom.
Then
if we plug such estimate in \eqref{eqn:eachPart},
we get the estimate of \eqref{eqn:estUp}.
\end{proof}

\begin{prop}\label{prop:estDown}
The form $\mathcal{S}^P$ defined in \eqref{eqn::stab} satisfies the following property
\begin{gather}\label{eqn:estDown}
\|\ww\|_{0,P} \lesssim \mathcal{S}^P(\ww,\,\ww),\quad\forall\ww\in\bm{\mathcal{V}}_h^k(P)\,.
\end{gather}
\end{prop}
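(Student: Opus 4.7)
My plan is to adapt the two-dimensional argument of~\cite{Dassi2020} to 3D, using the three estimates in Lemma~\ref{lem:lemEstOk} as the workhorse that translates each family of dof-pairings into a bound by $(\mathcal{S}^P(\ww,\ww))^{1/2}$. Since the right-hand side of~\eqref{eqn:estDown} is dimensionally $\|\ww\|_{0,P}^2$ (consistent with Proposition~\ref{prop:estUp}), I would prove the bound in the squared form $\|\ww\|_{0,P}^2\lesssim\mathcal{S}^P(\ww,\ww)$.

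The first step controls $\|\PiV\ww\|_{0,P}$ by a test-function/Helmholtz argument. For arbitrary $\bm{g}\in[\P_k(P)]^3$, use the 3D polynomial decomposition
\[
\bm{g} = \nabla\phi + \mm_I\wedge\bm{\psi},\qquad \phi\in\P_{k+1}(P)/\R,\;\bm{\psi}\in[\P_{k-1}(P)]^3/(\xx\,\P_{k-2}(P)),
\]
which is a direct sum by a straightforward dimension count for every $k\geq 1$, with norm equivalence plus the affine rescaling $\widetilde{\xx}=(\xx-\xx_P)/h_P$ of Lemma~\ref{lem:coef} yielding $h_P^{-1}\|\phi\|_{0,P}+\|\mm_I\wedge\bm{\psi}\|_{0,P}\lesssim\|\bm{g}\|_{0,P}$. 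Integration by parts gives
\[
(\ww,\bm{g})_P = -(\diver\ww,\phi)_P + \sum_{F\in\partial P}(\ww\cdot\nn_F,\phi|_F)_F + (\ww,\mm_I\wedge\bm{\psi})_P,
\]
and since $\diver\ww\in\P_{k-1}(P)$ and $\ww\cdot\nn_F\in\widetilde{\P}_k(F)$ the polynomials $\phi,\phi|_F$ can be silently replaced by their $L^2$-projections into $\P_{k-1}(P)$, $\widetilde{\P}_k(F)$ respectively. Expanding these projections in scaled monomials, applying Lemma~\ref{lem:lemEstOk} to each resulting pairing, and using Lemma~\ref{lem:coef} (together with its 2D face analogue and the standard polynomial trace inequality $\|\phi\|_{0,\partial P}\lesssim h_P^{-1/2}\|\phi\|_{0,P}$) to convert monomial coefficients back to $L^2$ norms, one arrives---after careful bookkeeping of the powers of $h_P$---at $|(\ww,\bm{g})_P|\lesssim (\mathcal{S}^P(\ww,\ww))^{1/2}\,\|\bm{g}\|_{0,P}$. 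Testing against $\bm{g}=\PiV\ww$ yields $\|\PiV\ww\|_{0,P}^2\lesssim\mathcal{S}^P(\ww,\ww)$.

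The virtual remainder $(I-\PiV)\ww$ is then controlled by a discrete Friedrichs-type inequality on the star-shaped polyhedron $P$,
\[
\|(I-\PiV)\ww\|_{0,P}^2 \lesssim h_P^2\bigl(\|\diver\ww\|_{0,P}^2+\|\curl\ww\|_{0,P}^2\bigr) + h_P\,\|\ww\cdot\nn\|_{0,\partial P}^2,
\]
whose three right-hand quantities are polynomial (of degree $k-1$, $k-1$ and $k$ respectively) and are therefore bounded by $\mathcal{S}^P(\ww,\ww)$ by the same monomial-expansion / Lemma~\ref{lem:lemEstOk} / Lemma~\ref{lem:coef} recipe as above; e.g.\ $\|\diver\ww\|_{0,P}^2\lesssim h_P^{-2}\mathcal{S}^P(\ww,\ww)$, which after multiplication by $h_P^2$ yields exactly what is needed. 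The $L^2$-orthogonality $\|\ww\|_{0,P}^2=\|\PiV\ww\|_{0,P}^2+\|(I-\PiV)\ww\|_{0,P}^2$ then assembles the two contributions into the desired bound.

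The hardest point is the rotational term $(\ww,\mm_I\wedge\bm{\psi})_P$: a direct application of~\eqref{eqn:terzaL1} is not tight, because the factor $|\mm_I|\lesssim h_P$ on $P$ contributes an extra power of $h_P$ that must be exploited explicitly---either by sharpening~\eqref{eqn:terzaL1} for cross moments, or by appealing to the alternative basis-function viewpoint in which unisolvence together with uniform shape-regularity yields $\|\bm{\phi}_i\|_{0,P}^2\lesssim|P|$ for every canonical VEM basis function---to avoid losing an $h_P^{-1}$ in the final bound. A secondary difficulty is ensuring the Friedrichs inequality retains its $h_P$-scaling in the presence of curved faces $F\in\partial_{\mathcal{C}}P$, where $\ww\cdot\nn_F$ lives in $\widetilde{\P}_k(F)$ and the face measure pulls back through the parametrisation $\gamma$; this is where the mesh-regularity parameter $\zeta$ and the regularity of $\gamma$ enter the implicit constant.
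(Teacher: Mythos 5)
Your plan is close in spirit to the paper's proof, which likewise reduces everything to the dof pairings of Lemma~\ref{lem:lemEstOk} after integrating by parts against a scalar potential; the difference is that the paper applies the Helmholtz decomposition to $\ww$ itself, $\ww=\nabla\phi+\curl\AA$ with $\phi$ solving a Neumann problem and $\AA$ a vector potential satisfying $\AA\times\nn_F=\bm{0}$, rather than splitting off $\PiV\ww$ by duality. Two points in your second half do not close as written. The Friedrichs-type inequality is mis-stated: its left-hand side is $\|(I-\PiV)\ww\|_{0,P}^2$ while the right-hand side carries the divergence, curl and normal trace of the \emph{full} field $\ww$. Such an inequality holds for a field with its own data, so you must either apply it to $\ww$ directly (which makes your entire duality step redundant) or to $(I-\PiV)\ww$ with the remainder's data on the right, reabsorbing the $\PiV\ww$ contributions through inverse and trace-inverse estimates together with your first step. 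This is repairable, but the logic as stated is circularly split.

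The more serious gap is the claim that $h_P^2\|\curl\ww\|_{0,P}^2\lesssim\mathcal{S}^P(\ww,\ww)$ follows ``by the same recipe.'' The internal cross moments control pairings $(\ww,\mm_I\wedge\mm_{k-1})_P$, i.e.\ moments of $\ww$, not of $\curl\ww$; to reach $\|\curl\ww\|_{0,P}^2=(\curl\ww,\qq)_P$ you must move a curl onto $\qq$, which produces the boundary term $\int_{\partial P}(\nn\times\ww)\cdot\qq$ involving the tangential trace of $\ww$ --- a quantity controlled by no degree of freedom. The paper's resolution is precisely the vector potential: since $\AA\times\nn_F=\bm{0}$ the boundary term vanishes in
\begin{gather*}
\|\curl\AA\|_{0,P}^2=(\curl\ww,\AA)_P=(\xx\wedge\pp,\curl\AA)_P=(\xx\wedge\pp,\ww-\nabla\phi)_P\,,
\end{gather*}
where $\curl\ww=\curl(\xx\wedge\pp)$ with $\|\xx\wedge\pp\|_{0,P}\lesssim h_P\|\curl\ww\|_{0,P}\lesssim\|\curl\AA\|_{0,P}$ by Lemma~\ref{lem:divEst}; the first term is then exactly a cross-moment dof and the second is handled by Cauchy--Schwarz and the already-established bound on $\|\nabla\phi\|_{0,P}$. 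Without this device (or an equivalent one) your treatment of the rotational part does not go through; your separate worry about the extra power of $h_P$ in \eqref{eqn:terzaL1} is legitimate but is the lesser of the two issues.
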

\begin{proof}
We can decompose a function $\ww\in\bm{\mathcal{V}}_h^k(P)$ via the so-called
Helmholtz decomposition, as reported in Corollary 3.4 of \cite{Girault1986},
given by
$
\ww := \nabla \phi + \curl\AA
$,
such that $\phi$ and  $\AA$ solve the following problems
\begin{gather*}
    \begin{cases}
        \Delta \phi = -\diver \ww & \text{in } P \\
        \nabla \phi \cdot \nn_F = \ww \cdot \nn_F & \forall F \in \partial P
    \end{cases}
    \qquad
    \begin{cases}
        -\Delta \AA = \curl \ww  &\text{in } P\\
        \diver \AA = 0 & \text{in } P\\
        \AA \times \nn_F = \bm{0} & \forall F \in \partial P
    \end{cases}
\end{gather*}
To prove \eqref{eqn:estDown}, we consider each of these fields and we show
$$
\|\nabla \phi\|_{0,P}^2\lesssim \mathcal{S}^P(\ww,\,\ww)\quad\text{and}\quad
\|\curl\AA\|_{0,P}^2\lesssim \mathcal{S}^P(\ww,\,\ww)\,.
$$
Befere dealing with the first term, we recall both $\diver\ww$ and $(\ww\cdot\nn)$ are polynomials and
we can define a proper $L^2$ projection operators on polyhedrons and faces into the polynomial space, i.e., $\Pi_P\phi$ and $\widetilde{\Pi}_{\mathfrak{F}}\phi$.
Here $\Pi_P\phi$ refers to the Cartesian global space,
while $\widetilde{\Pi}_{\mathfrak{F}}\phi$ is an $L^2$ projection on the parameter space of the face $F$.
\begin{align*}
\|\nabla \phi\|_{0,P}^2 &=\int_P \nabla \phi\cdot\nabla \phi\dP = -\int_P
\phi\diver\ww\dP + \sum_{F\in\partial P} \int_F \phi\,(\ww\cdot\nn_F)\dF &&\\[0.5em]
&=-\int_P \Pi_P\phi\diver\ww\dP + \sum_{F\in\partial P} \int_{\mathfrak{F}}
\widetilde{\Pi}_{\mathfrak{F}}\phi\,(\ww\cdot\nn_\mathfrak{F})\ds&&\hspace{-7em}(\text{$L^2$ projection property})\\[0.5em]
&=-\sum_{i=1}^t c_i\int_P m_i\diver\ww\dP + \sum_{F\in\partial P} \sum_{j=1}^s d_j\int_{\mathfrak{F}} \mathfrak{m}_j\,(\ww\cdot\nn_{\mathfrak{F}} )\ds                                      &&\\[0.5em]
&&&\hspace{-12em}(\text{highlight polynomials' coefficients})\\[0.5em]
&\lesssim \sum_{i=1}^t c_i\,h_P^{1/2} \big(\mathcal{S}^P(\ww,\,\ww)\big)^{1/2} +
\sum_{F\in\partial P} \sum_{j=1}^s d_j\, h_P^{1/2} \big(\mathcal{S}^P(\ww,\,\ww)\big)^{1/2}&&\\[0.5em]
&&&\hspace{-2em}(\text{Lemma~\ref{lem:lemEstOk}})\\[0.5em]
&\leq\left( \|\bm{c}\|_{l^2}\,h_P^{1/2}
+ \sum_{F\in\partial P} \|\bm{d}\|_{l^2}\,h_P^{1/2} \right)\big(\mathcal{S}^P(\ww,\,\ww)\big)^{1/2}&&\\[0.5em]
&&&\hspace{-6em}\hspace{-5em}(\text{norm equivalence in $\mathbb{R}^t$ and $\mathbb{R}^s$})\\[0.5em]
&\lesssim \left(h_P^{-3/2}\|\Pi_P\phi\|_{0,P}\,h_P^{1/2}
+ \sum_{F\in\partial P}
h_P^{-1/2}\|\widetilde{\Pi}_{\mathfrak{F}}\phi\|_{0,{\mathfrak{F}}}\,h_P^{1/2}\right) \big(\mathcal{S}^P(\ww,\,\ww)\big)^{1/2} &&\\[0.5em]
&&&\hspace{-6em}(\text{Lemma~\ref{lem:coef} and~\cite{chen2018some}})\\[0.5em]
&\lesssim \left( h_P^{-1}\|\phi\|_{0,P} + \sum_{F\in\partial P}
\|\phi\|_{0,{\mathfrak{F}}}\right) \big(\mathcal{S}^P(\ww,\,\ww)\big)^{1/2} &&
\hspace{-9em}(\text{continuity of $\Pi_P$ and $\widetilde{\Pi}_{\mathfrak{F}}$})\\[0.5em]
&\lesssim \left(h_P^{-1}\|\phi\|_{0,P} + \sum_{F\in\partial P} \|\phi\|_{0,{\mathfrak{F}}} \right)\,(\mathcal{S}^P(\ww,\,\ww)\big)^{1/2}\\[0.5em]
&\lesssim \|\nabla \phi\|_{0,P}\,(\mathcal{S}^P(\ww,\,\ww)\big)^{1/2}\,. &&
\end{align*}
Coming back to the beginning of such chain of inequalities we have
$$
\|\nabla \phi\|_{0,P} \lesssim (\mathcal{S}^P(\ww,\,\ww)\big)^{1/2}\,.
$$
Let us consider the second part, since $\curl\ww = \curl(\nabla \phi + \curl\AA)
= \curl\curl\AA$  and that $\AA \times \nn_F = \bm{0}$ for all faces $F$, we get
\begin{align*}
\|\curl\AA\|_{0,P}^2 &= \int_P \curl\AA\cdot\curl\AA\dP  = \int_P \curl\ww \cdot \AA \dP
\end{align*}
Having $\ww \in \bm{\mathcal{V}}_h^k(P)$ this implies that
$\curl\ww\in[\mathbb{P}_{k-1}(P)]^3$ and also that it exists a $\bm{p} \in
[\mathbb{P}_{k-1}(P)]^3$ such that $\curl\ww = \curl(\bm{x} \wedge \bm{p})$. Moreover, by \cite{Droniou2021} we have that
\begin{gather*}
    \norm{\bm{x} \wedge \bm{p}}_{0, P} \lesssim h_P \norm{\curl{\ww}}_{0, P}
    \lesssim \norm{\curl{\AA}}_{0, P}
    \quad \text{and} \quad
    \bm{x} \wedge \bm{p} = \sum_{i=1}^{\pi_{k-1}} g_i \mm_I
    \wedge\mm_{i}
\end{gather*}
with $g_i \in \mathbb{R}$ are suitable coefficients,
and the latter inequality follows from Lemma \ref{lem:divEst}.
Continuing from before we get
\begin{align*}
    &\int_P \curl\ww \cdot \AA \dP = \int_P
    \curl(\bm{x} \wedge\bm{p}) \cdot \AA \dP =
    \int_P
    \bm{x} \wedge\bm{p} \cdot \curl\AA \dP =\\&
    \int_P
    \bm{x} \wedge \bm{p} \cdot (\ww - \nabla \phi)\dP =
    \int_P
    \bm{x} \wedge \bm{p} \cdot \ww\dP -
    \int_P
    \bm{x} \wedge \bm{p} \cdot \nabla \phi\dP.
\end{align*}
Let us consider the first term and realize that it is related to the third set
of degrees of freedom for the vector fields in $\bm{\mathcal{V}}_h^k(P)$, thus
we have
\begin{align*}
    &\int_P
    \bm{x} \wedge \bm{p} \cdot \ww\dP =
    \sum_{i=1}^{\pi_{k-1}} g_i \int_P \mm_I \wedge \mm_{i} \cdot \ww\dP = \abs{P}
    \sum_{i=1}^{\pi_{k-1}} g_i \texttt{D3}_i(\ww) \leq \\
    &h_P^{\frac{3}{2}}
    \left(\sum_{i=1}^{\pi_{k-1}} g_i^2 \right)^{\frac{1}{2}} \left(\abs{P}
    \sum_{i=1}^{\pi_{k-1}} \texttt{D3}_i(\ww)^2 \right)^{\frac{1}{2}} \leq
    \norm{\bm{x} \wedge \bm{p}}_{0, P} (\mathcal{S}^P(\ww,\,\ww)\big)^{1/2}
    \leq\\&
    \| \curl {\AA} \|_{0,P}
    (\mathcal{S}^P(\ww,\,\ww)\big)^{1/2}.
\end{align*}
We have now the term with $\nabla \phi$ to bound, namely
\begin{gather*}
    - \int_P
    \bm{x} \wedge \bm{p} \cdot \nabla \phi\dP \leq
    \| \bm{x} \wedge \bm{p} \|_{0,P}
    \| \nabla \phi \|_{0,P}\lesssim
    \| \curl {\AA} \|_{0,P}
    (\mathcal{S}^P(\ww,\,\ww)\big)^{1/2}.
\end{gather*}
Collecting the results we finally obtain the proof.
\end{proof}

\begin{prop}
Consider the bi-linear form
\begin{gather}
a^P_h(\vv,\ww) := \int_P \nu(x)\,{\bm\Pi}_k^0 \vv\cdot {\bm\Pi}_k^0 \ww\dP + \mathcal{S}^P(\vv,\,\ww)\,,
\label{eqn:biForm}
\end{gather}
where we defined
$$
 \mathcal{S}^P(\vv,\,\ww) := \|\nu\|_{L^{\infty}(P)}|P|\sum_{i=1}^{\#\texttt{dof}}\texttt{dof}_i\big((I-{\bm\Pi}_k^0)\vv\big)\texttt{dof}_i\big((I-{\bm\Pi}_k^0)\ww\big)\,,
$$
here $\#\texttt{dof}$ is the number of degrees of freedom associated with the polyhedron $P$ and
we define the function $\texttt{dof}_i$ that given a function in $\bm{\mathcal{V}}_h^k(P)$ returns its $i^{\text{th}}$ degrees of freedom.
Then, there exist two positive constants $\alpha_*,\alpha^*\in\R$ independent on the mesh size
such that the following relation holds
\begin{equation}
\alpha_*\,a^P(\ww,\ww) \leq a^P_h(\ww,\ww) \leq  \alpha^*a^P(\ww,\ww)\,.
\label{eqn:stability}
\end{equation}
\end{prop}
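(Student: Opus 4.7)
The plan is to establish \eqref{eqn:stability} as a direct corollary of Propositions \ref{prop:estUp} and \ref{prop:estDown}, combined with the orthogonality properties of the $L^2$-projector ${\bm\Pi}_k^0$. Since
$$
a^P(\ww,\ww)=\int_P \nu(x)\,\ww\cdot\ww\dP,
$$
I can first reduce to the case $\nu\equiv 1$ by bounding $\nu$ between $\|\nu\|_{L^\infty(P)}$ and a positive lower bound (inherited from the assumption that $\nu=\mu\kappa^{-1}$ with $\mu,\kappa^{-1}$ strictly positive); the constants $\|\nu\|_{L^\infty(P)}$ appearing in the definition of $\mathcal{S}^P$ will then cancel cleanly against those extracted from $a^P$.

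For the \emph{upper bound}, I would split the discrete form as
$$
a^P_h(\ww,\ww)=\|\nu^{1/2}{\bm\Pi}_k^0\ww\|_{0,P}^2+\mathcal{S}^P\big((I-{\bm\Pi}_k^0)\ww,(I-{\bm\Pi}_k^0)\ww\big).
$$
The first term is controlled by $\|\nu^{1/2}\ww\|_{0,P}^2$ since ${\bm\Pi}_k^0$ is an $L^2$-orthogonal projector onto $[\mathbb{P}_k(P)]^3\subset\bm{\mathcal{V}}_h^k(P)$, so in particular it is a contraction in the weighted $L^2$-norm up to $\|\nu\|_{L^\infty(P)}$. For the second term I would apply Proposition \ref{prop:estUp} to $(I-{\bm\Pi}_k^0)\ww\in\bm{\mathcal{V}}_h^k(P)$ (since both $\ww$ and its polynomial projection live in the local space), obtaining
$$
\mathcal{S}^P\big((I-{\bm\Pi}_k^0)\ww,(I-{\bm\Pi}_k^0)\ww\big)\lesssim \|(I-{\bm\Pi}_k^0)\ww\|_{0,P}^2\leq\|\ww\|_{0,P}^2,
$$
where the last inequality is the standard Pythagorean estimate for orthogonal projections. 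Summing the two contributions yields $a^P_h(\ww,\ww)\lesssim a^P(\ww,\ww)$, i.e.\ the constant $\alpha^*$.

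For the \emph{lower bound}, I would exploit the $L^2$-orthogonal decomposition
$$
\|\ww\|_{0,P}^2=\|{\bm\Pi}_k^0\ww\|_{0,P}^2+\|(I-{\bm\Pi}_k^0)\ww\|_{0,P}^2,
$$
and bound each piece separately by a term of $a_h^P(\ww,\ww)$: the polynomial part is absorbed in $\int_P \nu(x)\,{\bm\Pi}_k^0\ww\cdot{\bm\Pi}_k^0\ww\dP$ after factoring out a lower bound of $\nu$, while the complementary part is controlled by $\mathcal{S}^P((I-{\bm\Pi}_k^0)\ww,(I-{\bm\Pi}_k^0)\ww)$ via Proposition \ref{prop:estDown} applied to $(I-{\bm\Pi}_k^0)\ww\in\bm{\mathcal{V}}_h^k(P)$. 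Combining these gives $a^P(\ww,\ww)\lesssim a_h^P(\ww,\ww)$, furnishing $\alpha_*$.

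The main (and essentially only) obstacle is checking that $(I-{\bm\Pi}_k^0)\ww$ indeed belongs to the virtual space $\bm{\mathcal{V}}_h^k(P)$, so that Propositions \ref{prop:estUp} and \ref{prop:estDown} apply to it; this follows from the inclusion $[\mathbb{P}_k(P)]^3\subset\bm{\mathcal{V}}_h^k(P)$, which one verifies against the defining conditions in \eqref{eqn:mixedCurved} (divergence and curl of a degree-$k$ polynomial field have degree $k-1$, and polynomial traces are polynomials on planar faces, whereas on curved faces they compose with $\gamma$ to land in $\widetilde{\mathbb{P}}_k(F)$ by definition). Once this inclusion is in hand, the entire proof is just the decomposition argument sketched above with no further technical work required.
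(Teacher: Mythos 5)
Your overall strategy is exactly the paper's: the printed proof is the single sentence that the bound is a ``direct consequence of Propositions~\ref{prop:estUp} and~\ref{prop:estDown}'', and your decomposition of $a_h^P$ into the consistency part (handled by the $L^2$-orthogonality of ${\bm\Pi}_k^0$, contraction and Pythagoras) and the stabilization part (handled by applying the two propositions to $(I-{\bm\Pi}_k^0)\ww$) is the standard way to make that sentence precise. The reduction to $\nu\equiv 1$ via uniform upper and lower bounds on $\nu=\mu\kappa^{-1}$ is also fine. (A cosmetic point: Proposition~\ref{prop:estDown} as printed bounds $\|\ww\|_{0,P}$ rather than $\|\ww\|_{0,P}^2$; you correctly use the squared version, which is what its proof actually establishes.)

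The gap is in the step you yourself flag as ``the main (and essentially only) obstacle''. The inclusion $[\P_k(P)]^3\subset\bm{\mathcal{V}}_h^k(P)$ is \emph{false} when $P$ has a genuinely curved face, which is precisely the setting of this paper. For $F\in\partial_{\mathcal{C}}P$ the space $\widetilde{\P}_k(F)$ consists of functions $v$ with $v\circ\gamma\in\P_k(\mathfrak{F})$; for a polynomial field $\pp$ the pullback $(\pp\cdot\nn_F)\circ\gamma$ involves both $\pp\circ\gamma$ (a polynomial in the components of $\gamma$, not in $(u,v)$) and the unit normal $\nn_F\circ\gamma=\partial_u\gamma\times\partial_v\gamma/\|\partial_u\gamma\times\partial_v\gamma\|$, which is not polynomial in $(u,v)$ unless $\gamma$ is affine. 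Hence $(I-{\bm\Pi}_k^0)\ww$ need not belong to $\bm{\mathcal{V}}_h^k(P)$, and Propositions~\ref{prop:estUp} and~\ref{prop:estDown} cannot be invoked verbatim on it. This is repairable: the proofs of those propositions only use that the divergence and curl of their argument are polynomials of degree $k-1$ (true for $\ww-{\bm\Pi}_k^0\ww$) and that the face, divergence and internal moments are well defined and suitably bounded, which one obtains for $\ww$ and ${\bm\Pi}_k^0\ww$ separately by a triangle inequality (polynomial trace and inverse inequalities for ${\bm\Pi}_k^0\ww$, the virtual-space estimates for $\ww$), or equivalently by restating the two propositions on the larger space $\bm{\mathcal{V}}_h^k(P)+[\P_k(P)]^3$. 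That verification has to be carried out; your justification via the inclusion does not work. To be fair, the paper's one-line proof elides exactly the same point.
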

\begin{proof}
The result is a direct consequence of Proposition~\ref{prop:estUp} and~\ref{prop:estDown}.
\end{proof}

\section{Integration over curved Polyhedrons}\label{sec:integration}

In this section we explain how we compute integrals inside polyhedrons characterized by curved faces.
Fist, we describe how to integrate over curved faces.
Then, we show how we extend the idea proposed in~\cite{Sommariva2009} from curved polygons to polyhedrons with curved faces.

\subsection{Integration over curved 2d polygons}

We consider a curved face $F$ of a polyhedron and
we made the same assumpions on definition, existence and regularity on the map $\gamma$,
see the final part of Section~\ref{sec:notations}.

Given a function $f$ defined over the face $F$,
to integrate any function defined on such face,
we exploit the following formula:
\begin{eqnarray}
\int_{F} f(x,\,y,\,z)~\text{d}F = \int_{\mathfrak{F}} \tilde{f}(u,\,v)\left\|\frac{\partial \gamma}{\partial u}\times\frac{\partial \gamma}{\partial v}\right\|\ds
 \approx\sum_{i=1}^n \tilde{f}(u_i,\,v_i)\left\|\frac{\partial \gamma}{\partial u}\times\frac{\partial \gamma}{\partial v}(u_i,\,v_i)\right\|\omega_i\,,\nonumber \\
\label{eqn:intPara}
\end{eqnarray}
where $\tilde{f}$ is the function $f$ written in terms of the parameter-space coordinates $(u,\,v)$,
$\left\|\frac{\partial \gamma}{\partial u}\times\frac{\partial \gamma}{\partial v}\right\|$ is the Jacobian of the transformation $\gamma$
and $\left\{(u_i,\,v_i)\right\}_{i=1}^n$ are the quadrature points in the parameter space and $\{\omega_i\}_{i=1}^n$ are their corresponding weights.

From a more practical point of view the usage of \eqref{eqn:intPara} is not so straightforward.
Indeed, one has to define the function $\tilde{f}(u,\,v)$.
However, to use the quadrature rule in \eqref{eqn:intPara} and avoid the definition of the function $\tilde{f}$,
we compute the physical quadrature points corresponding to the one in the parameter space via the map~$\gamma$
$$
\left\{(u_i,\,v_i)\right\}_{i=1}^n\rightarrow\left\{(x_i,\,y_i,\,z_i)\right\}_{i=1}^n\,,
$$
and we modify \eqref{eqn:intPara} as
\begin{eqnarray}
\sum_{i=1}^n \tilde{f}(u_i,\,v_i)\left\|\frac{\partial \gamma}{\partial u}\times\frac{\partial \gamma}{\partial v}(u_i,\,v_i)\right\|\omega_i =
\sum_{i=1}^n f(x_i,\,y_i,\,z_i)\left\|\frac{\partial \gamma}{\partial u}\times\frac{\partial \gamma}{\partial v}(u_i,\,v_i)\right\|\omega_i\,.\nonumber \\
\label{eqn:faceTrick}
\end{eqnarray}
This quadrature rule does not depend on $\tilde{f}$ so there is no need to create such a function.
Furthermore such double quadrature point lists will play a key in the computation of the volume quadrature presented in Section~\ref{sec:3dInt}.

\begin{remark}
Since the domain ${\mathfrak{F}}\subset [0,\,1]^2$ in the parameter space may have curved boundaries,
we exploit the integration strategy proposed in~\cite{Sommariva2009,BeiraodaVeiga2019}.
\end{remark}

\subsection{Integration over curved 3d polyhedrons}\label{sec:3dInt}

In this section we propose a quadrature rule to integrate a function inside polihedrons characterized by curved faces.
The proposed formula is an extension of the ones proposed in~\cite{Sommariva2009,BeiraodaVeiga2019}.

Consider a function $f(x,\,y,\,z)$ defined in a polyhedron $P$ and suppose that we would like to compute the integral
$$
\int_P f(x,\,y,\,z)\dP\,.
$$
The idea is to move the computation of such integral
to an integration over polyhedron's faces.
To achieve this goal we exploit the divergence theorem.
We define a proper vectorial field $\F$ such that
$$
\diver\F = f\,.
$$
One possible choice of such function is the vector field
$$
\F(x,\,y,\,z) = \left[
\begin{array}{c}
0 \\
0 \\
\mathlarger{\int_{z_0}^z f(x,\,y,\,t)\dt}
\end{array}
\right]\,,
$$
where $z_0$ is the $z$-coordinate of the polyhedron's barycenter.
Staring from such vector field and exploiting the idea of \eqref{eqn:faceTrick},
we obtain the following integration formula
\begin{eqnarray}
 \int_P f\dP &=& \int_P \diver \F\dP =  \int_{\partial P} \nn\cdot\F\dF
 = \sum_{F\in\partial P} \int_F \nn\cdot\F\dF
 = \sum_{F\in\partial P} \int_F n_z\,F_z\dF\nonumber\\
 &=& \sum_{F\in\partial P} \int_{\mathfrak{F}} \tilde{n}_z(u,\,v)\,\tilde{F}_z(u,\,v)\left\|\frac{\partial \gamma}{\partial u}\times\frac{\partial \gamma}{\partial v}\right\|\ds\nonumber\\
 &\approx& \sum_{F\in\partial P} \sum_{i=1}^n \tilde{n}_z(u_i,\,v_i)\,\tilde{F}_z(u_i,\,v_i)\left\|\frac{\partial \gamma}{\partial u}\times\frac{\partial \gamma}{\partial v}(u_i,v_i)\right\|\omega_i\nonumber\\
 &=& \sum_{F\in\partial P} \sum_{i=1}^n n_z(x_i,\,y_i,\,z_i)\, F_z(x_i,\,y_i,\,z_i)\left\|\frac{\partial \gamma}{\partial u}\times\frac{\partial \gamma}{\partial v}(u_i,v_i)\right\|\omega_i\,.\nonumber\\
 \label{eqn:forWithoutFace}
\end{eqnarray}
In the last expression we are able to compute all terms.
Both $n_z$ and $\left\|\frac{\partial \gamma}{\partial u}\times\frac{\partial \gamma}{\partial v}\right\|$ are known functions
since they describe the $z$-component of the normal to the surface geometry and the Jacobian of the map $\gamma$, respectively.
Moreover, $F_z(x_i,\,y_i,\,z_i)$  is computable too.
Indeed, recalling the definition of the vector field $\F$,
we evaluate such term via an edge integral:
$$
F_z(x_i,\,y_i,\,z_i) = \int_{z_0}^{z_i} f(x_i,\,y_i,\,t)\dt \approx \sum_{j=1}^{m} f(x_i,\,y_i,\,z_j)\,\omega_j\,.
$$
Substiting the last equation in \eqref{eqn:forWithoutFace},
we get the following integration formula:
\begin{eqnarray}
\int_P f\dP = \sum_{F\in\partial P} \sum_{i=1}^n\sum_{j=1}^{m} n_z(x_i,\,y_i,\,z_i)\,f(x_i,\,y_i,\,z_j)\left\|\frac{\partial \gamma}{\partial u}\times\frac{\partial \gamma}{\partial v}(u_i,v_i)\right\|\omega_i\omega_j\,.\nonumber \\
\label{eqn:finalQPRule}
\end{eqnarray}

We can make the following observation about such formula.
First of all it relay on some reuglarity assumpions of polihedron's face:
they have to be defined via a map $\gamma$ that have to be at least $C^2$
since we exploit surface normals and the Jacobian.

The proposed quadrature rule has more than the expected quadrature points
that is not appealing from the computational point of view.
However, we can use the procedure proposed in~\cite{Sommariva2015}
to get a quadrature rule with the same order composed by a subset of the input points.

Such compression procedure is general and
it is based on the resolution of a non-negative least squares problem that also ensures positivity of the weights.
We apply such strategy to reduce the number of quadrature points in Equation~\eqref{eqn:finalQPRule}.

To show the effectiveness of the compression procedure,
we show the following example.
Consider a cube whose top face is a bilinear surface and a quadrature forumla that exactly integrate polynomials of degree 2.

In Figure~\ref{fig:qpCompress} we show the resulting quadrature points.
The savings in terms of opeartion is evident: we move from 192 to 10 quadrature points.
The main feature of this compression procedure is that
it {always} results in the minimum number of points that interpolate a specific polynomial~\cite{Sommariva2015}.
Indeed, in this case we are considering a quadrature rule of degree 2 so
the compression procedure select 10 quadrature points and properly modify their weights.

\begin{figure}[!htb]
\centering
\begin{tabular}{ccc}
\texttt{no compression} &\phantom{sssss} &\texttt{compression} \\
\includegraphics[width=0.4\textwidth]{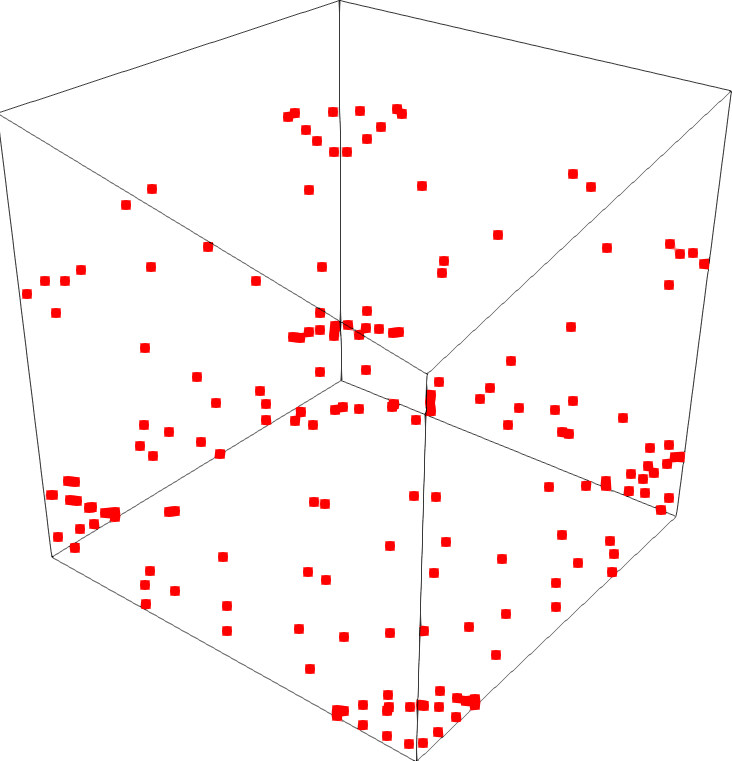}& &
\includegraphics[width=0.4\textwidth]{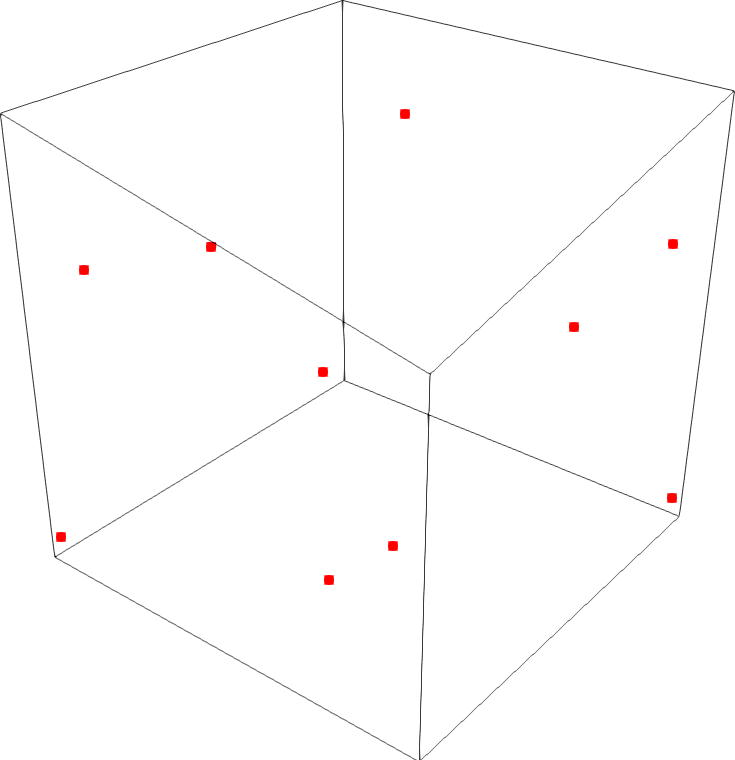}
\end{tabular}
\caption{Number of quadrature points without and with the compression procedure.
In the former case we have 192 points, while in the latter only 10 points.}
\label{fig:qpCompress}
\end{figure}

\begin{remark}
To compute $\F_z$ it is necessary the quote $z_0$.
A deeper analysis is required in finding such value, since the quadrature points may fall outside $P$.
This is an important issue since the function $f$ can be not defined outside $P$.
A good idea is to consider not a quote $z_0$ but a generic plane
that cuts the polyhedron in such a way that {all}
the quadrature points are inside $P$ or at least on the faces of the polyhedron.
\end{remark}

%
%
\subsection{Preliminary numerical validation on quadrature weights}

Before dealing with the convergence analysis of Problem~\eqref{pb:weak},
we make a numerical example that focus on the proposed quadrature rule.
We consider the following domain
$$
\Omega :=\{(x,\,y,\,z)\in\R^3\::\: R_1^2\leq x^2+y^2\leq R_2^2,\, y\geq 0,\,0\leq z \leq 1\}\,,
$$
where $R_1$ and $R_2$ are 0.2 and 1., respectively,
and we explit the proposed quadrature formula, to compute the following integral
$$
\int_\Omega \sqrt{x^2+y^2} + z \dO\,.
$$
In Table~\ref{tab:resultVol} we compute the relative error
varying both the discretizations of $\Omega$ and the quadrature rule degree.
More specifically we are considering two meshes \texttt{cyli1} and \texttt{cyli2} shown in Figure~\ref{fig:meshToIntegrate}.

\begin{table}[!htb]
\centering
\begin{tabular}{ccc}
\multicolumn{3}{c}{Cylinder example}\\
\hline
Gau\ss{} degree &\texttt{cyli1} &\texttt{cyli2} \\
\hline
1 &1.1263e-03 &2.5196e-04 \\
2 &3.7429e-06 &2.6066e-06 \\
3 &6.0654e-08 &2.9604e-08 \\
\hline
\end{tabular}
\caption{}
\label{tab:resultVol}
\end{table}

\begin{figure}[!htb]
\centering
\begin{tabular}{cc}
\texttt{cyli1} &\texttt{cyli2} \\
\includegraphics[width=0.40\textwidth]{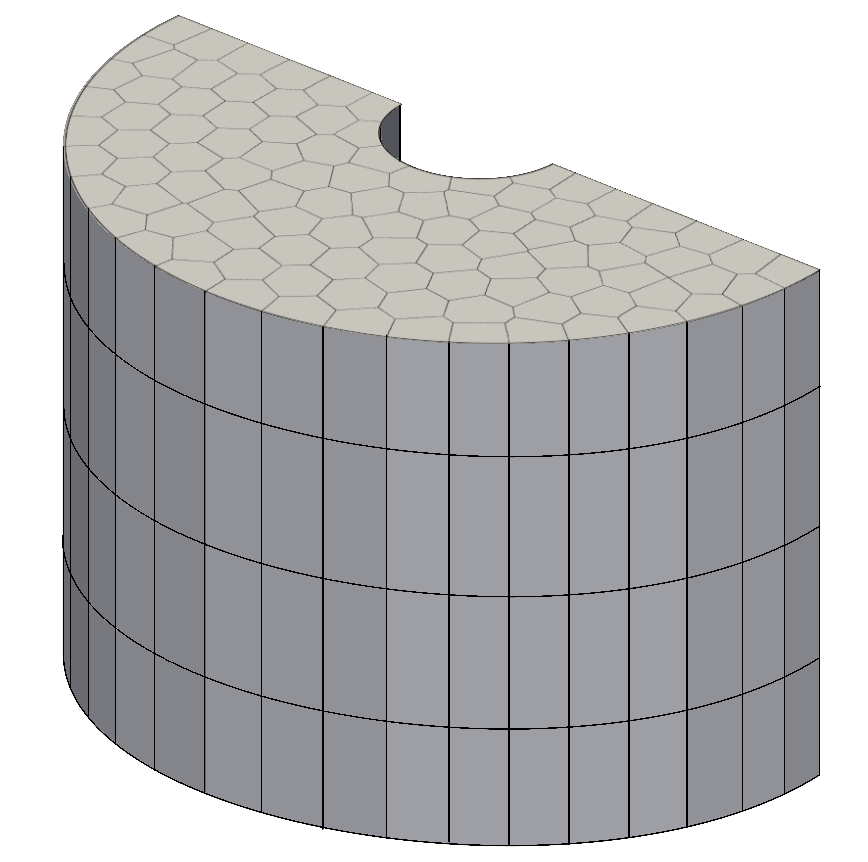} &
\includegraphics[width=0.40\textwidth]{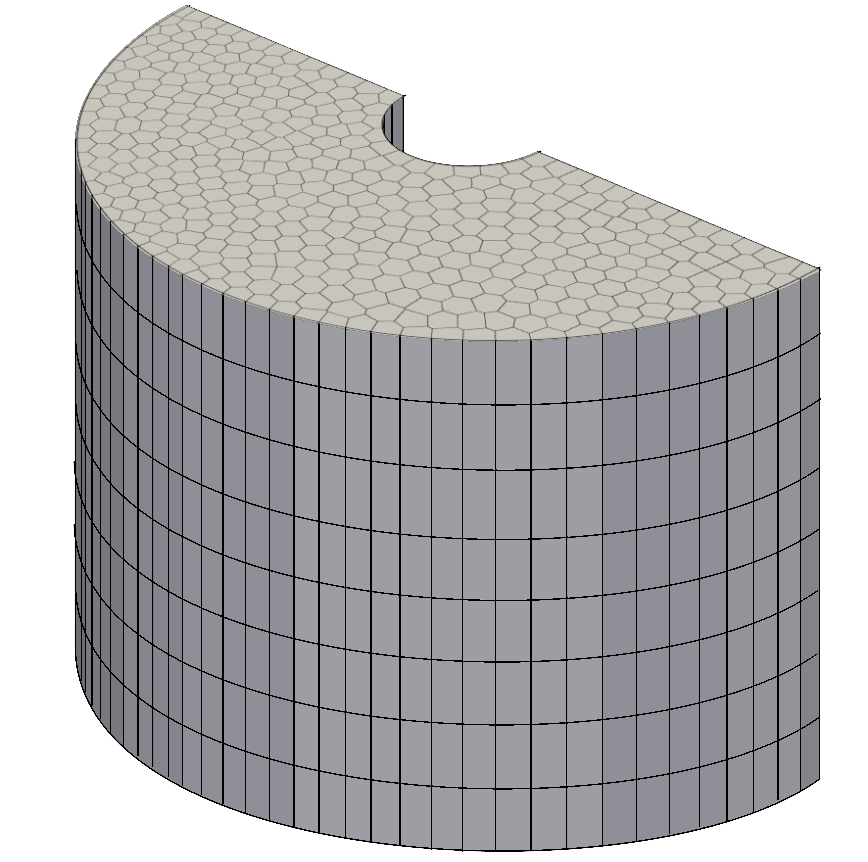} \\
\end{tabular}
\caption{Meshes used to produce the data in Table~\ref{tab:resultVol}.}
\label{fig:meshToIntegrate}
\end{figure}

\todo[inline]{cambia figura cyli2}

\section{Numerical Examples}\label{sec:numExe}

In this section we give numerical evidence about the behaviour between straight
(\texttt{noGeo}) and curved (\texttt{withGeo}) approach.
We consider different kind of meshes and
describe how we build them inside each subsection.
Regardless of the type of tessellation
we associate to each mesh the mesh-size
$$
h = \frac{1}{N_P}\sum_{P\in\Omega_h} h_P\,,
$$
where $N_P$ is the number of polyhedrons in $\Omega_h$.

To proceed with the convergence analysis of the errors,
we build a sequence of meshes with decreasing $h$ and
we compute the following errors indicators:
\begin{itemize}
 \item $L^2$-error on the velocity field
 $$
 e_{L^2}^{\vv} := \sqrt{\sum_{P\in\Omega_h}\|\PiV\vv_h - \vv\|^2_{0}}\,,
 $$
 where $\PiV\vv_h$ is the element-wise $L^2$-projection operator of the virtual element variable $\vv_h$ inside the polyhedron $P$
 and $\vv$ is the known exact vector field;
 \item $L^2$-error on the pressure
 $$
 e_{L^2}^{p} := \sqrt{\sum_{P\in\Omega_h}\|p_h - p\|^2_{0}}\,,
 $$
 where $p_h$ is the element-wise polynomial that represents the pressure and
 $p$ is the exact pressure distribution.
\end{itemize}

From the theoretical point of view both error indicators have a convergence rate of $O\left(h^{k+1}\right)$.
However, when we consider $k=2,3$ and the \texttt{noGeo} case,
we get $O\left(h^2\right)$,
only the novel \texttt{withGeo} approach will have the expected trend for each~$k$.

Such behavior is justified by the following remark.
Let us suppose that the error in a numerical scheme can be split in two parts
\begin{equation}
\err = \errApp + \errGeo\,.
\label{eqn:genErr}
\end{equation}
On the one hand, we make an error in approximating the functional continuous spaces with a discrete one.
Such error is related to degree of the polynomial used inside the discrete space taken into account, $\errApp$.
On the other hand, we make an error in approximating the computational domain $\Omega$ with a mesh $\Omega_h$, $\errGeo$.

When we consider standard domains, i.e., domains whose boundaries are planes,
elements with straight faces perfectly match such boundaries so $\errGeo$ is null.
Consequently the error of \eqref{eqn:genErr} consists {only} on the first part and
we have
\begin{equation}
\err = \errApp + 0 = \errApp \sim O\left(h^{k+1}\right)\,,
\label{eqn:errVero}
\end{equation}
for either $e_{L^2}^{\vv}$ or $e_{L^2}^{p}$.
As a consequence we get the expected convergence rates since the geometrical error disappears.

However, when we are dealing with domains whose boundaries are curved,
if we approximate them via elements with straight faces,
the geometrical error is not null.
It is related {only} to the mesh size
and not on the approximation of the functional spaces we are using.
More specifically, if we consider straight faces,
it is {always} $O\left(h^2\right)$.
Consequently, for PDEs defined over curve domains \eqref{eqn:genErr} becomes
\begin{equation*}
\err = \errApp + \errGeo \sim O\left(h^{k+1}\right) + O\left(h^{2}\right)  = O\left(h^{\min\{k+1,2\}}\right)\,,
\end{equation*}
Thus
when we have a curved boundaries and we consider a mesh with straight faces,
the expected rate is achieved {only} if $k=1$,
otherwise $\errGeo$ overcomes $\errApp$.

If we use the proposed approach, i.e.,
we modify the functional spaces in such a way that they can handle elements with curved faces,
the geometrical error is null.
Indeed, since we put inside the space the map $\gamma$ that {exactly} describes the geometry,
we do not make any error in approximating curved boundaries or interfaces via the functional spaces.
Thus, we recover the expected convergence rate as in \eqref{eqn:errVero}.

\paragraph{Meshes}
In the following examples we will use meshes whose boundary faces are quadrilaterals.
We make this choice to allow the comparison between the \texttt{noGeo} and \texttt{withGeo} case.
Indeed, if we consider a polygon with more than four edges,
it is not a priori guaranteed that its vertexes lay on the same plane so
we need to sub-triangulate such polygons in order to proceed with the \texttt{noGeo} approach.
Then, if we make such sub-triangulation,
the \texttt{noGeo} and \texttt{withGeo} case are not comparable in terms of degrees of freedom
since the former requires more degrees of freedom with respect to the latter.

\subsection{Example 1: convergence analysis with Dirichlet boundary}

We consider a domain composed by 5 planar faces and one curvilinear face defined by
$$
\Gamma = \left\{(x,\,y,\,z)\in\R^3\::\: z +\frac{1}{10}\,\sin(\pi x)-1 = 0\right\}\,.
$$
We define the right-hand side and the boundary conditions in such a way
that the solution of Problem~\eqref{pb:weak} is
$$
p(x,\,y,\,z) := \left(z+\frac{1}{10}\,\sin(\pi x)-1\right)^2\quad\text{and}\quad \bm{q} = -\nabla p\,,
$$
with $\kappa=\mu=1$ and we impose essential boundary conditions on all the boundaries including the curved one.
In Figure~\ref{fig:meshExe1} we show one of the mesh taken into account
to collect the error for the convergence analysis of the errors $e_{L^2}^{\vv}$ and~$e_{L^2}^{p}$.

\begin{figure}[!htb]
\centering
\includegraphics[width=0.45\textwidth]{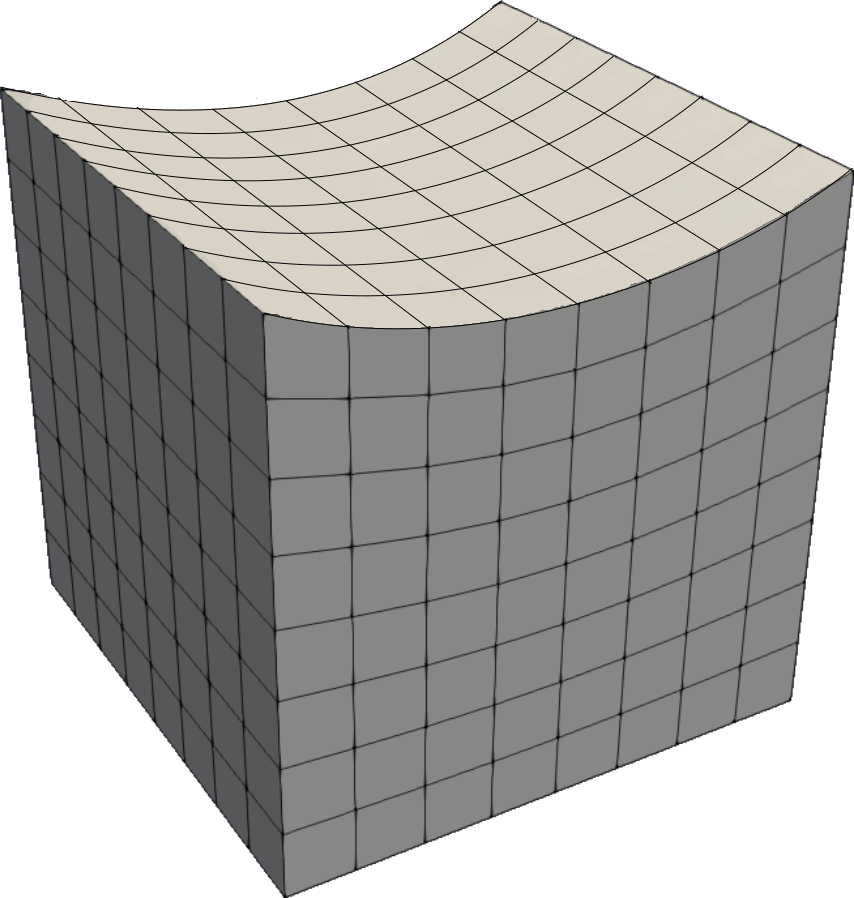}
\caption{Example 1: one of the mesh taken into account for the convergence
analysis for the \texttt{noGeo} case.}
\label{fig:meshExe1}
\end{figure}

In Figure~\ref{fig:convExe1} we show the convergence lines for such errors varying the VEM approximation degrees for both the \texttt{noGeo} and \texttt{withGeo} case.
More specifically, dashed and full lines represent the error obtained via the \texttt{noGeo} and \texttt{withGeo} approach.

As it was already discussed at the beginning of this section,
for $k=1$ these two numerical schemes behave as expected, i.e.,
both have an error trend of $O(h^2)$.
However, if we consider $k=2$ and $3$ their trends are different.
Indeed, the \texttt{noGeo} behaves as $O(h^2)$, while
the \texttt{withGeo} one as $O(h^{k+1})$.

\begin{figure}[!htb]
\centering
\includegraphics[width=1\textwidth]{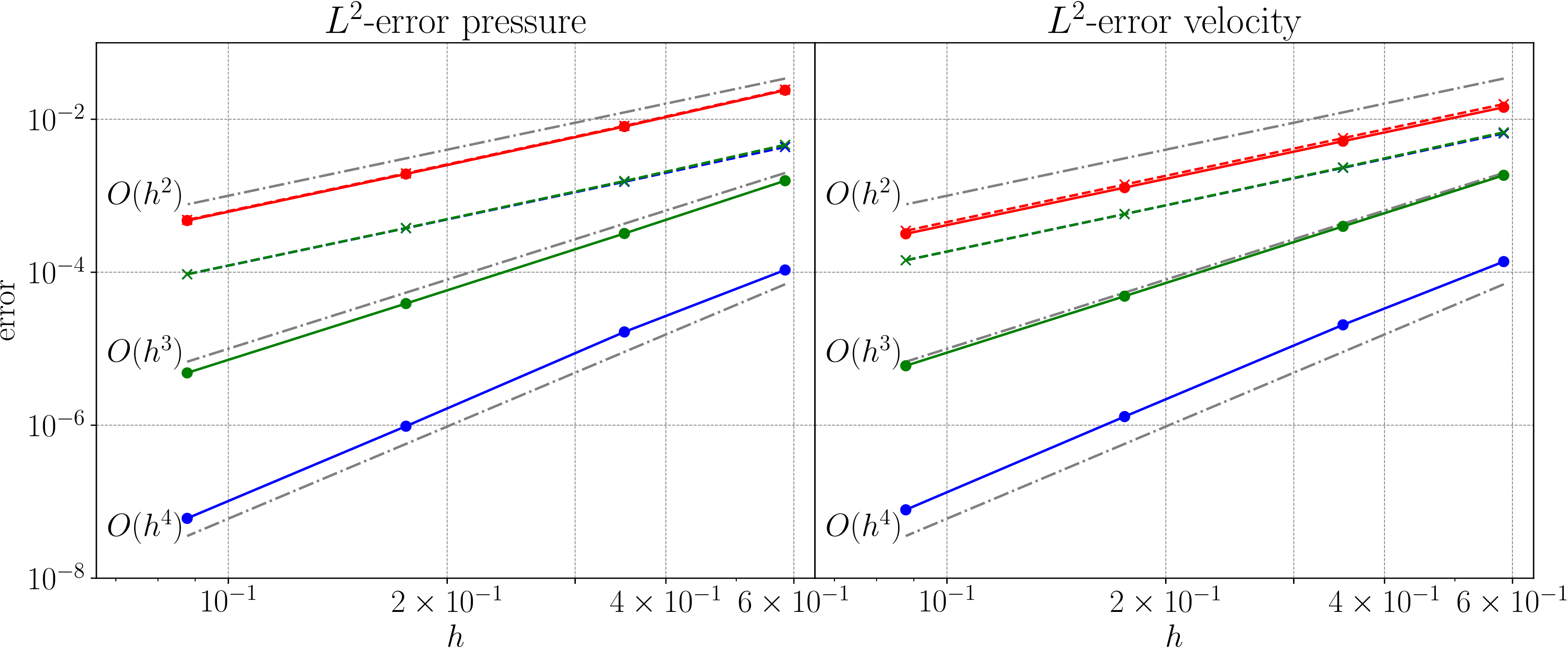}\\[0.25cm]
\includegraphics[width=0.75\textwidth]{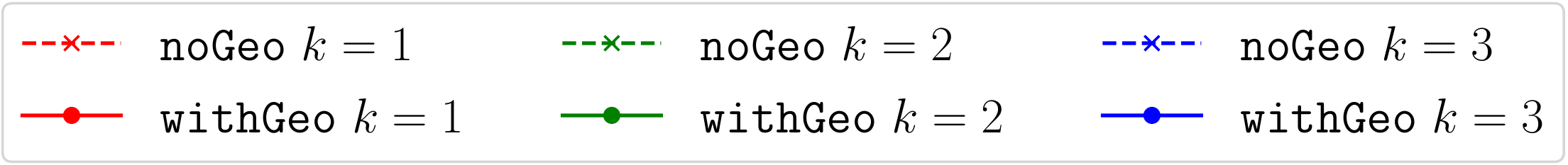}
\caption{Example 1: convergence lines for both $e_{L^2}^{\vv}$ and $e_{L^2}^{p}$.}
\label{fig:convExe1}
\end{figure}

By comparing such convergence lines we have the numerical evidence about what we
inferred at the beginning of this section.
Also the lines of the error for the $\texttt{noGeo}$ case coincides varying the degree $k>1$, i.e.,
\errGeo{} is too large and such convergence lines are actually $\errGeo$ for each degree $k$.

\subsection{Example 2: convergence analysis with Neumann boundary}

In this section we consider as computational domain
$$
\Omega :=\{(x,\,y,\,z)\in\R^3\::\: R_1^2\leq x^2+y^2\leq R_2^2,\, y\geq 0,\,0\leq z \leq 1\}\,,
$$
where $R_1$ and $R_2$ are 0.2 and 1., respectively.
On such domain we consider Problem~\ref{pb:weak} where the exact solution is
$$
p(x,\,y,\,z) = \sin(\pi x)\,\cos(\pi y)\,\sin(\pi z)\quad\text{and}\quad\bm{q} = -\nabla p\,.
$$
and on planar boundaries we impose essential boundary conditions,
while on the inner and outer curved boundary we impose natural boundary conditions.

To get a computational domain of such geometry,
we extrude along the $z$ axis a two-dimensional polygonal mesh, see Figure ~\ref{fig:meshExe2}.
We consider two types of two dimensional meshes to extrude:
a mesh composed by squares and
one composed by triangles.
We refer to such meshes as \texttt{quad} and \texttt{tria}, respectively.

\begin{figure}[!htb]
\centering
\begin{tabular}{cc}
\multicolumn{2}{c}{\texttt{tria}}\\
\includegraphics[width=0.4\textwidth]{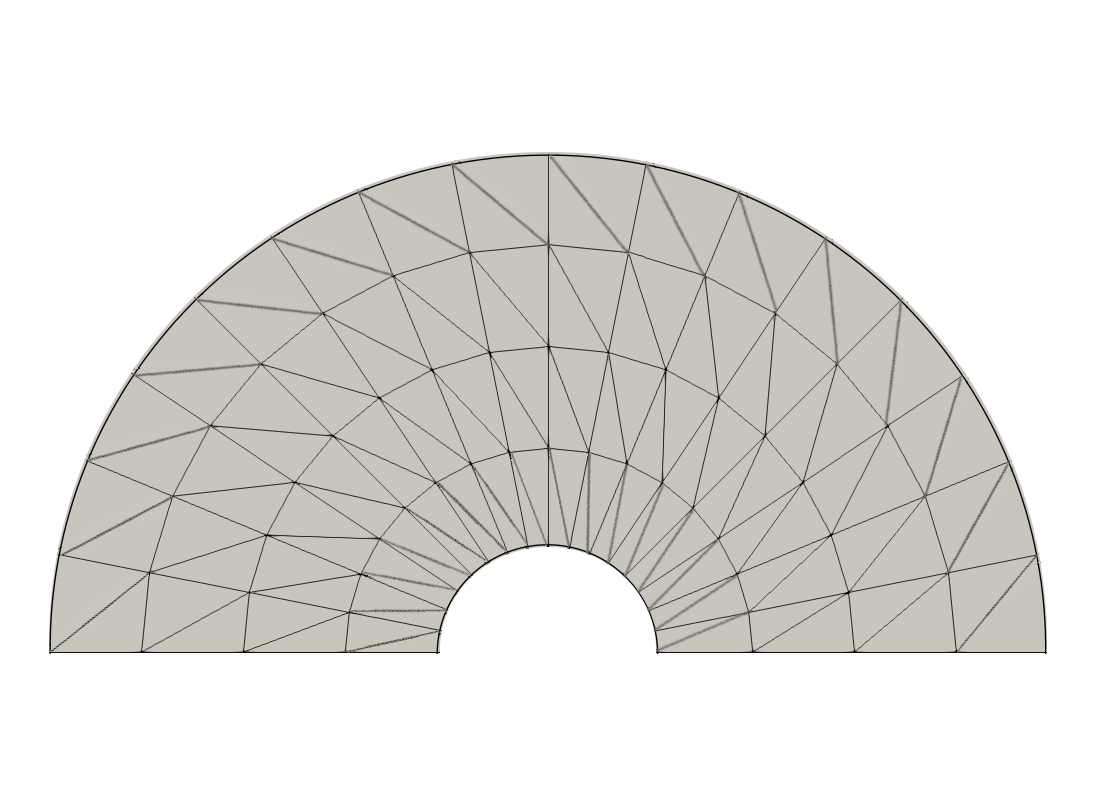} &\includegraphics[width=0.4\textwidth]{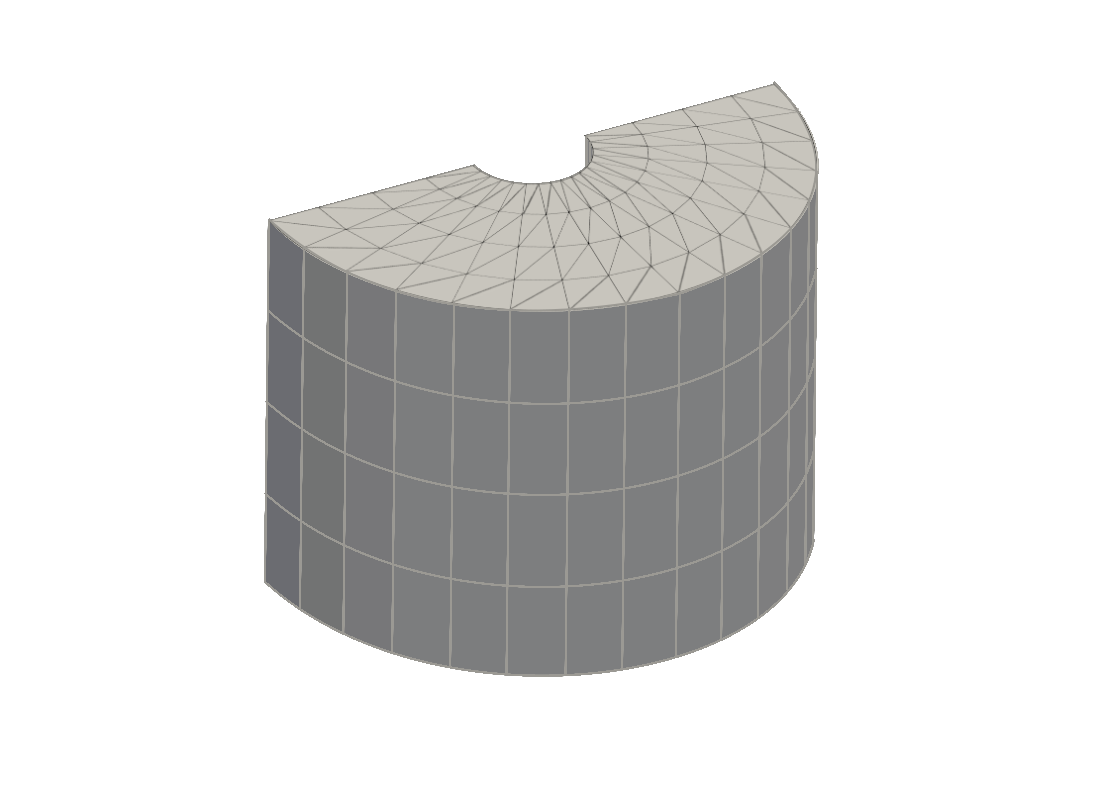} \\
\multicolumn{2}{c}{\texttt{quad}}\\
\includegraphics[width=0.4\textwidth]{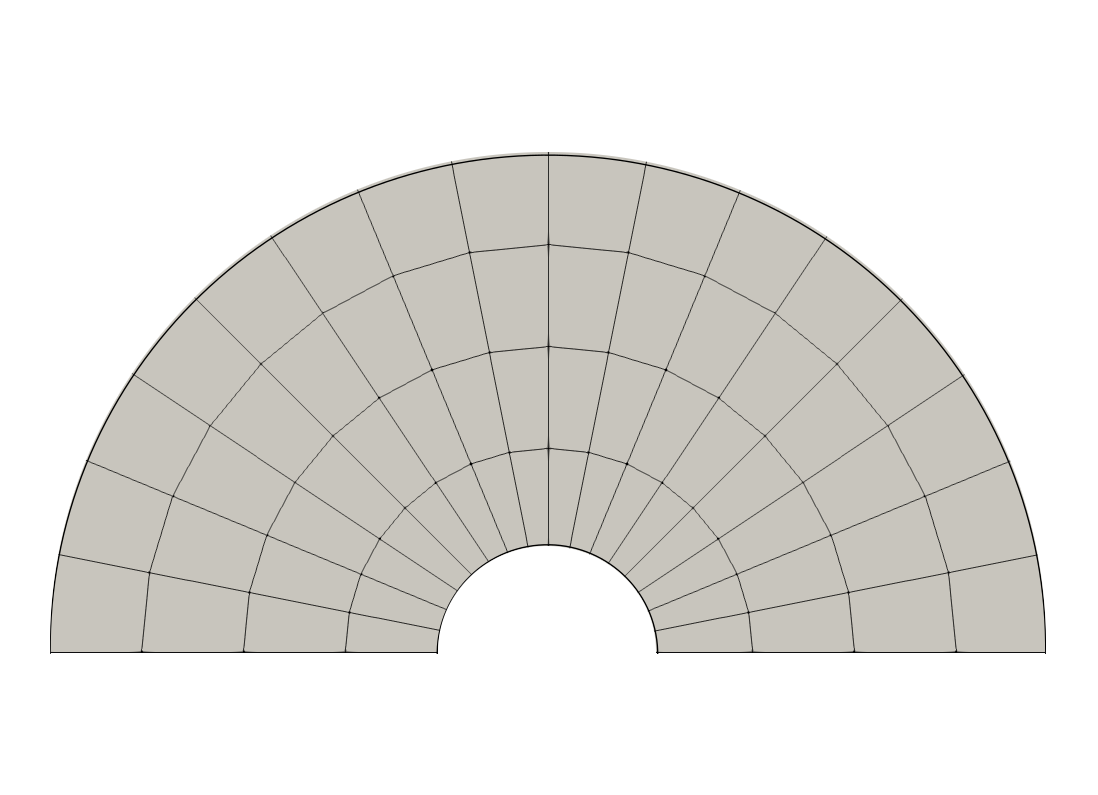} &\includegraphics[width=0.4\textwidth]{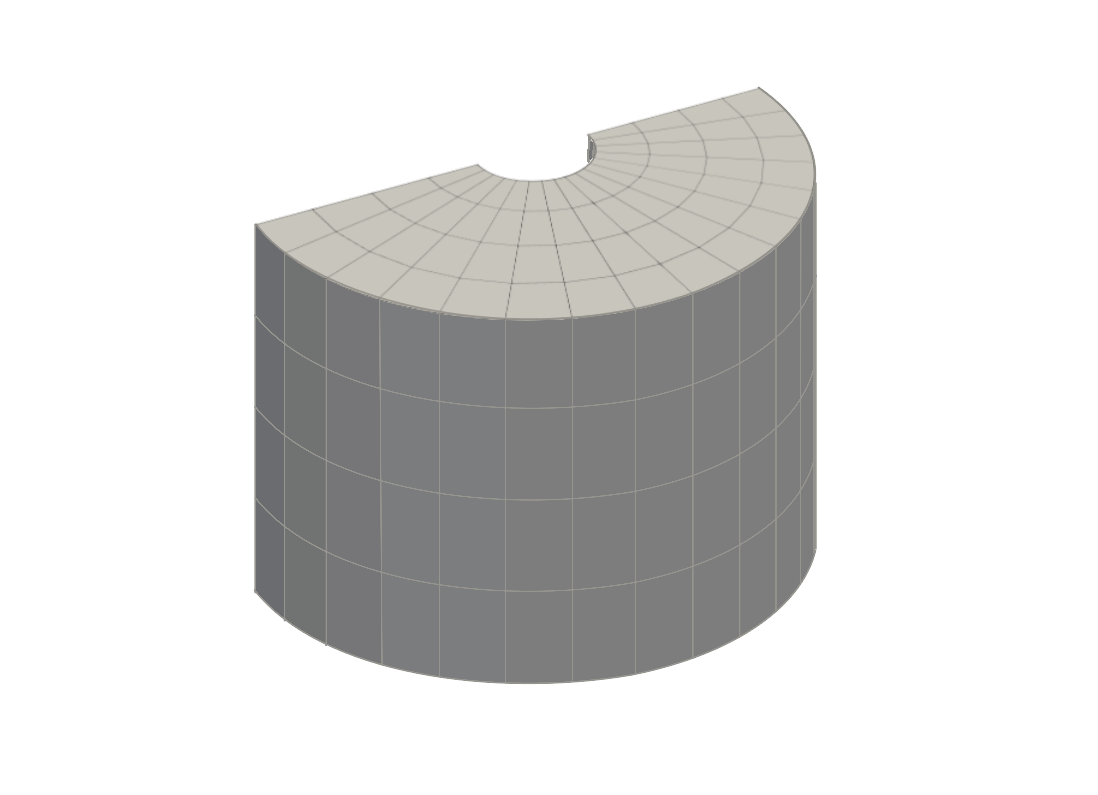}
\end{tabular}
\caption{Example 2: on the left the two-dimensional meshes we are extruding, on
the right the final meshes. In both cases we are showing the \texttt{noGeo}
meshes.}
\label{fig:meshExe2}
\end{figure}

In Figure~\ref{fig:convExe2} we collect the convergence lines for each type of mesh.
As already seen in the previous example we got the expected error trend for degrees $k=2$ and 3 {only} with \texttt{withGeo} approach,
while the \texttt{noGeo} one has always a trend of $O(h^2)$ for both $e_{L^2}^{\vv}$ and $e_{L^2}^{p}$ and for each approximation degree.
Moreover, also in this case the dotted associated with $k>1$ coincides.
Such fact gives a further numerical evidence that \errGeo{} overcomes \errApp{} and
what is actually represented by the dashed lines in Figure~\ref{fig:convExe2} is \errGeo{}.

\begin{figure}[!htb]
\centering
\includegraphics[width=1\textwidth]{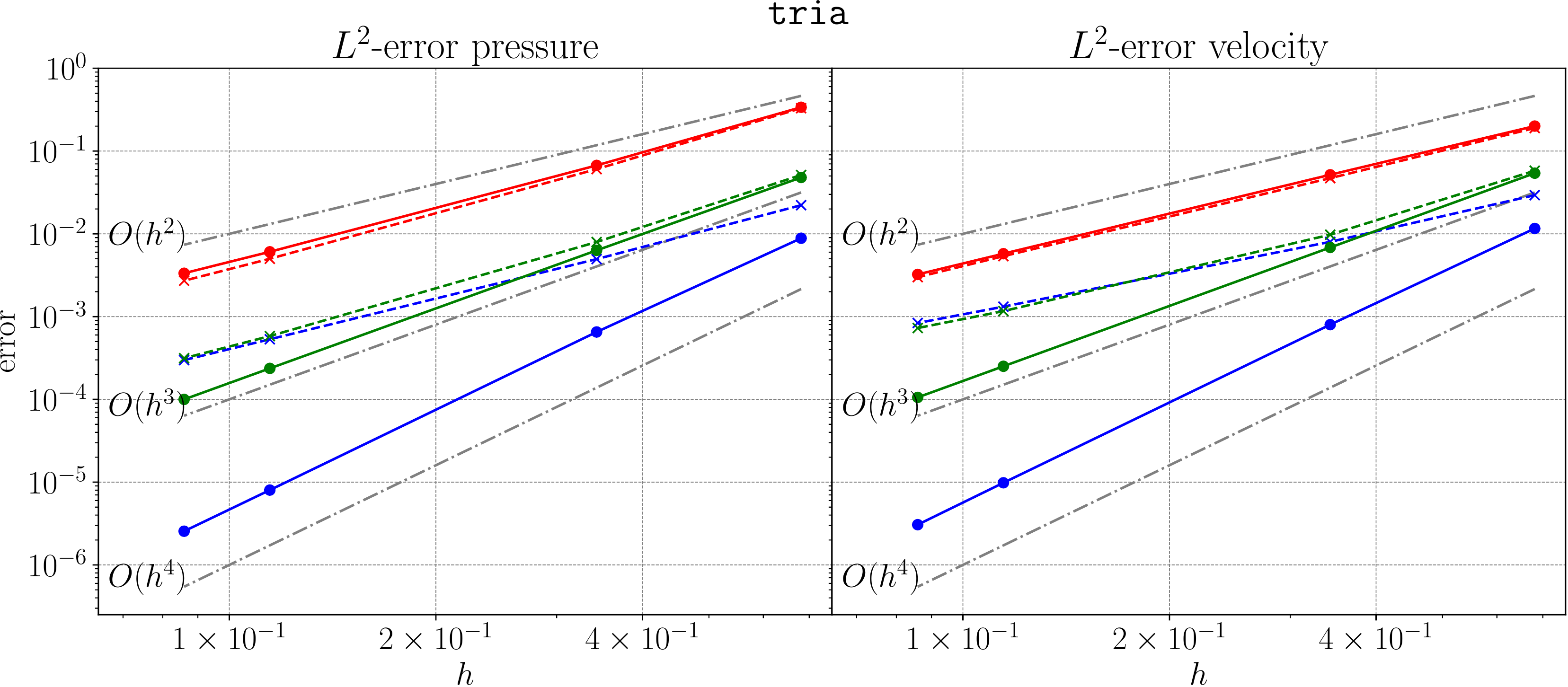}\\
\includegraphics[width=1\textwidth]{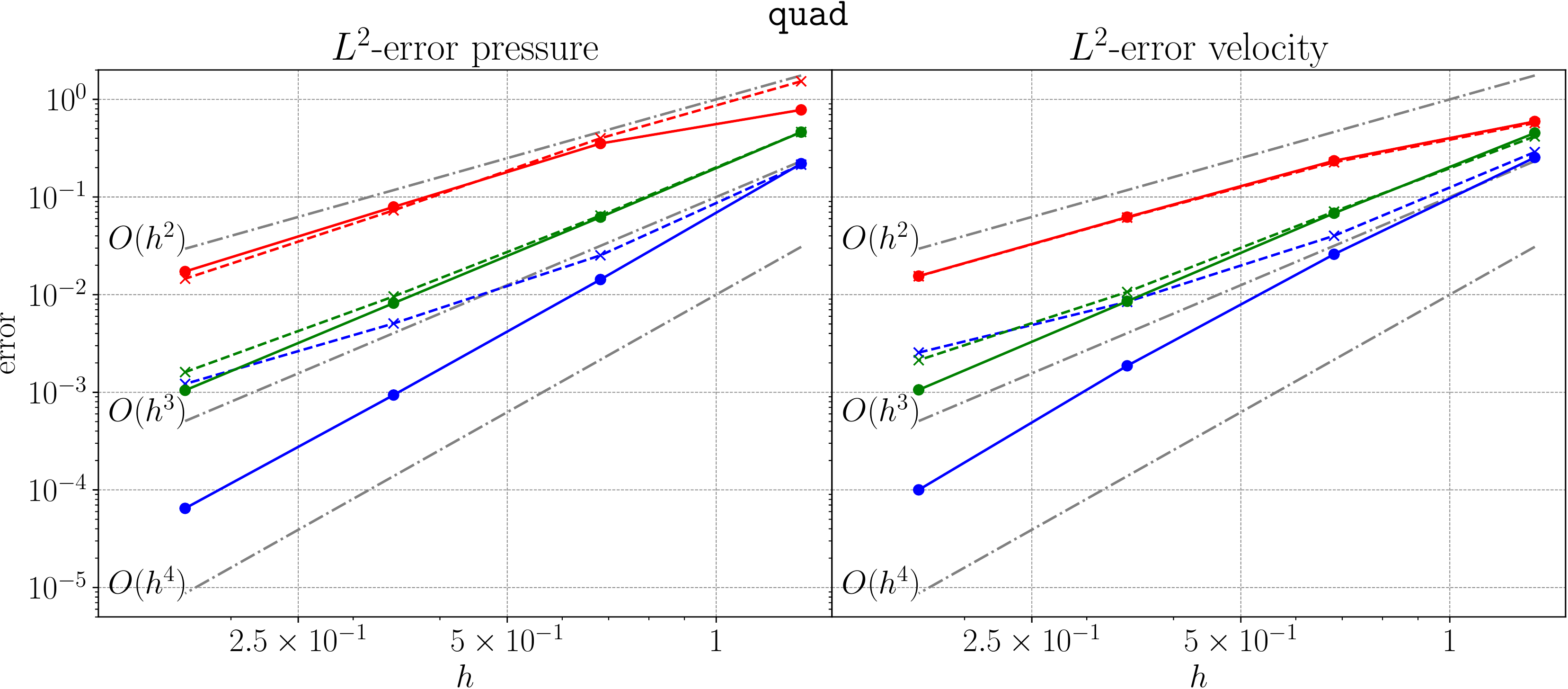}\\[0.25cm]
\includegraphics[width=0.75\textwidth]{imm/legend}
\caption{Example 2: convergence lines for both $e_{L^2}^{\vv}$ and $e_{L^2}^{p}$.}
\label{fig:convExe2}
\end{figure}

\subsection{Example 3: corner point meshes}

We consider now a particular kind of curved cells: the corner point elements.
Such cells have the topology of a cube but their top and bottom faces are bilinear surfaces, i.e.,
they are defined by the following map $\gamma:[0,\,1]^2\to\mathbb{R}^3$
$$
\gamma(u,\,v) = (1-u)(1-v)\xx_A+u(1-v)\xx_B+ uv\,\xx_C+u\,(1-v)\xx_D\,,
$$
where $\xx_A,\,\xx_B,\,\xx_C$, and $\xx_D$ are the 4 points at the top (or at the bottom) of the cell.
In oil industry basins and reservoirs are usually described by this type of meshes~\cite{Aarnes2008}.

We solve Problem~\eqref{pb:weak} in a unit cube $\Omega=[0,\,1]^3$
composed of three layers of materials characterized by different values of permeability.
More specifically, the top and the bottom layers have $\kappa=1.$ while the middle one has $\kappa=0.01$.
We set natural boundary conditions on the top and bottom faces of the domain, i.e.,
the pressure variable is equal to zero at the top and equals to one at the bottom.
Then, we consider the force term equal to zero.
We consider only a VEM approximation degree $k=2$ as a representative degree,
similar consideration can be done for $k>2$.

Since we do not have the exact solution, we can not compute the error and
we will give only a qualitative analysis on the pressure we get.
Moreover we consider three refinement levels of the mesh at hand to ensure
that the method does converge to plausible solution, we refer to them as \texttt{ref 1}, \texttt{2} and \texttt{3}.

\begin{figure}[!htb]
\centering
\begin{tabular}{ccc}
\texttt{ref 1} &
\texttt{ref 2} &
\texttt{ref 3} \\[1em]
\includegraphics[width=0.3\textwidth]{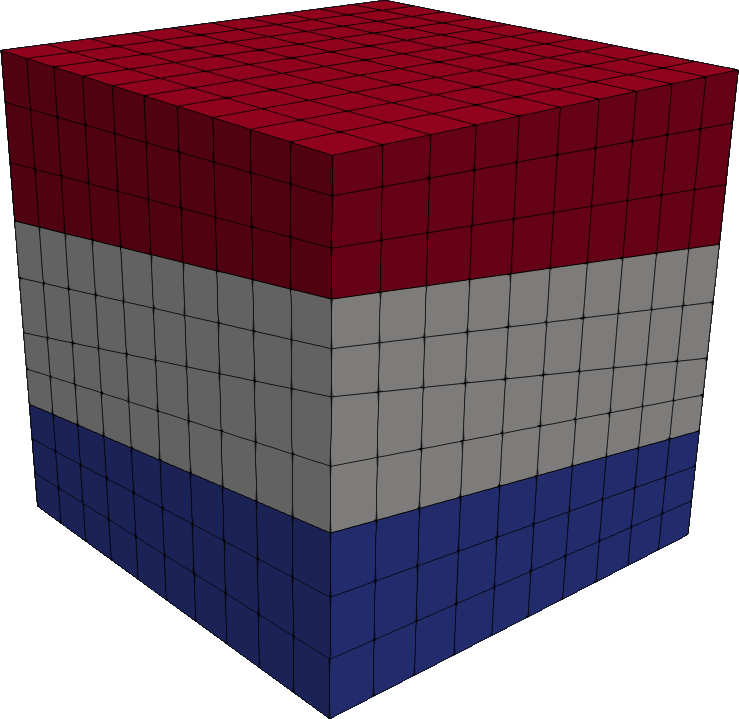} &
\includegraphics[width=0.3\textwidth]{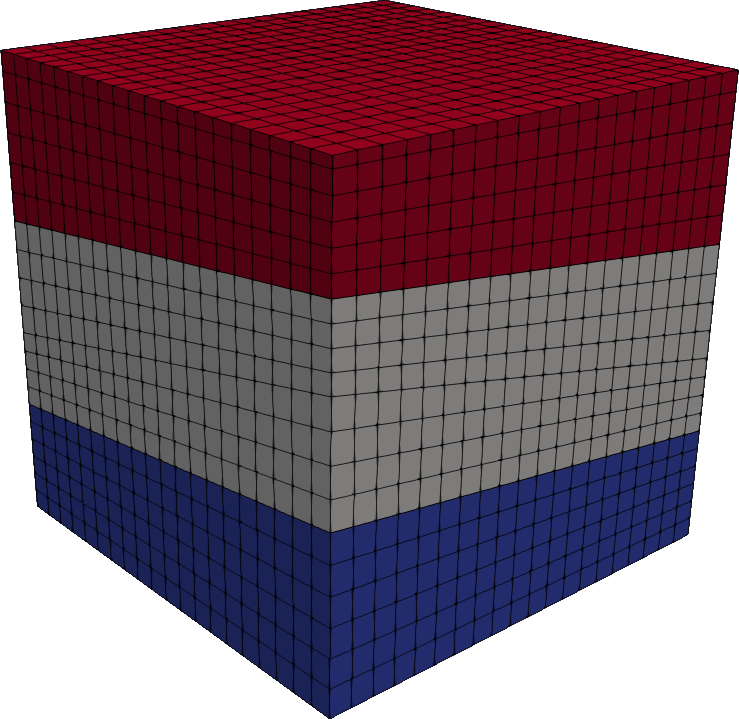} &
\includegraphics[width=0.3\textwidth]{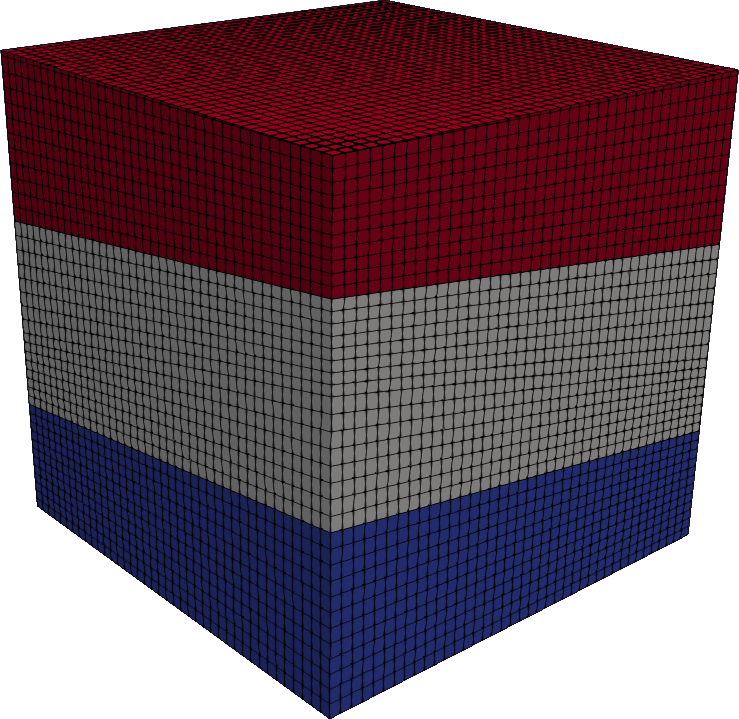} \\
\end{tabular}
\caption{Example 3: refinement levels of the mesh with different layers.}
\label{fig:pressField}
\end{figure}

We further underline that in such discretizations all the mesh elements are corner point cells
so the proposed virtual element incorporates the curved geometry within the function space definition.
Thanks to this approach we do not need to introduce any approximation of the curved faces.
Moreover we do not need to sub-triangulate them and introduce further degrees of freedom and,
consequently, increase the size of the linear system at hand.

\begin{figure}[!htb]
\centering
\begin{tabular}{ccc}
\texttt{ref 1} &
\texttt{ref 2} &
\texttt{ref 3} \\[1em]
\includegraphics[width=0.3\textwidth]{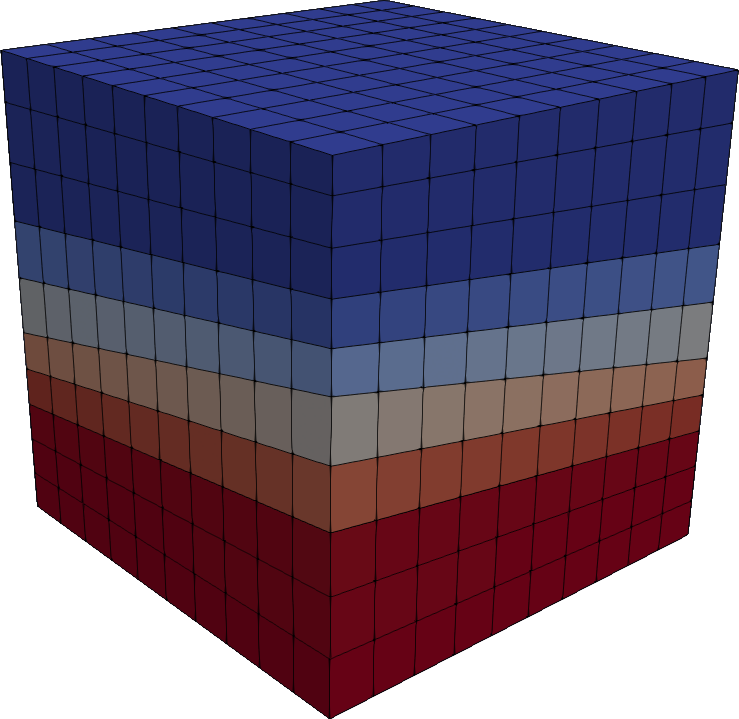} &
\includegraphics[width=0.3\textwidth]{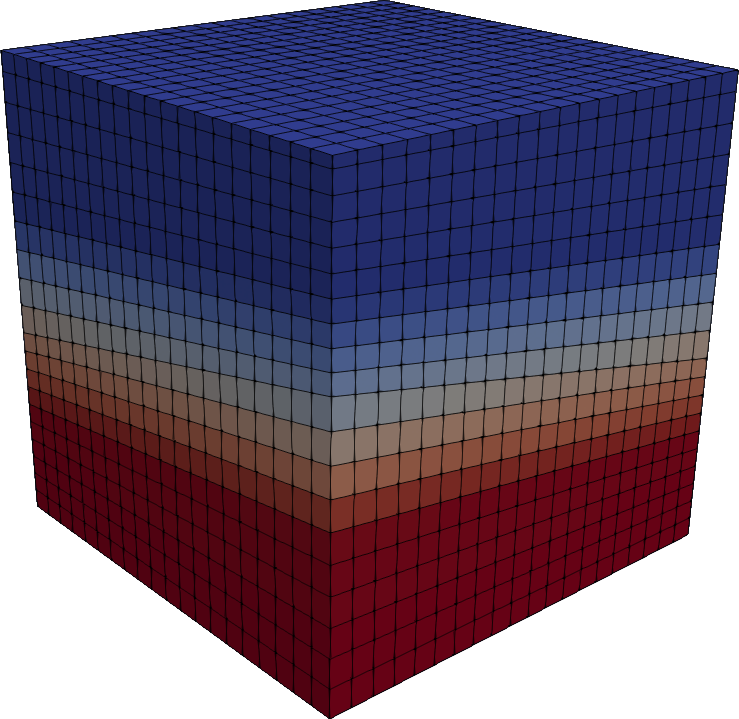} &
\includegraphics[width=0.3\textwidth]{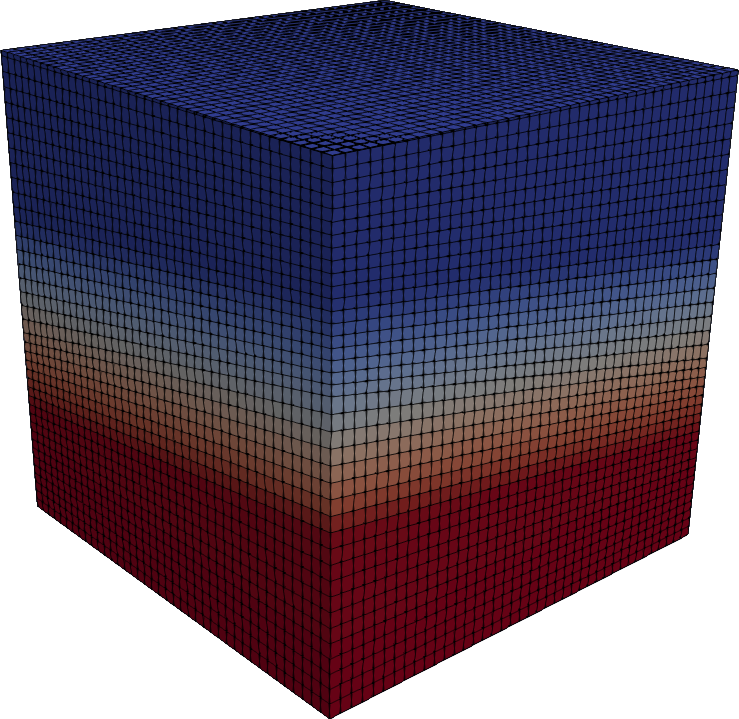} \\[1em]
\multicolumn{3}{c}{\includegraphics[width=0.50\textwidth]{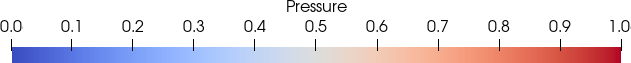}}
\end{tabular}
\caption{Example 3: pressure filed with three different refinement levels.}
\label{fig:pressField1}
\end{figure}

From the data in Figure~\ref{fig:pressField} we observe a steep change of the pressure values.
This fact is expected since we are considering different values of the permeability $\kappa$.
As it was expected, such change of pressure is better captured by mesh refinement.

\section{Conclusions}\label{sec:conc}

In this work, we have proposed a mixed virtual element scheme in three space dimensions to solve Darcy problems. 
The considered scheme handles curved portions of boundary or internal interfaces without any degradation of the expected order of convergence rate. 
Moreover, it can be seen as a natural extension of the standard case,
indeed the proposed functional spaces coincide with the ones used for the standard case if the element has no curved faces.
We also proposed a theoretical analysis 
to show how to define the approximation spaces, degrees of freedom, and stabilization form and to obtain a stable numerical scheme. 
Such method also required the definition quadrature rule able to deal with polyhedrons characterized by curved faces.
In this paper we propose a possible strategy which still need a deep theoretical analysis,
but it is validated by ad-hoc numerical experments and 
implicitly proven by the solution of Darcy problems.
Finally, several numerical tests further show that 
the proposed scheme achieves the expected order without any degradation due to geometrical errors.

\section*{Acknowledgments}
This paper is the last checkpoint of the project ``Bend VEM 3d''.
All authors acknowledge INdAM-GNCS which makes possible this interesting and long journey.

\bibliographystyle{elsarticle-num}
\bibliography{biblio_no_url}

\begin{thebibliography}{10}
\expandafter\ifx\csname url\endcsname\relax
  \def\url#1{\texttt{#1}}\fi
\expandafter\ifx\csname urlprefix\endcsname\relax\def\urlprefix{URL }\fi
\expandafter\ifx\csname href\endcsname\relax
  \def\href#1#2{#2} \def\path#1{#1}\fi

\bibitem{bear1993flow}
J.~Bear, C.-F. Tsang, G.~de~Marsily, Flow and contaminant transport in
  fractured rock, Academic Press, San Diego, 1993.

\bibitem{MacMinn2010}
C.~W. MacMinn, M.~L. Szulczewski, R.~Juanes, {${\rm CO}_2$} migration in saline
  aquifers. {P}art 1. {C}apillary trapping under slope and groundwater flow,
  Journal of Fluid Mechanics 662 (2010) 329--351.
\newblock \href {https://doi.org/10.1017/S0022112010003319}
  {\path{doi:10.1017/S0022112010003319}}.

\bibitem{Aarnes2008}
J.~E. Aarnes, S.~Krogstad, K.-A. Lie, Multiscale mixed/mimetic methods on
  corner-point grids, Computational Geosciences 12~(3) (2008) 297--315.
\newblock \href {https://doi.org/10.1007/s10596-007-9072-8}
  {\path{doi:10.1007/s10596-007-9072-8}}.

\bibitem{BeiraodaVeiga2019}
L.~Beir\~{a}o~da Veiga, A.~Russo, G.~Vacca, The virtual element method with
  curved edges, ESAIM: M2AN 53~(2) (2019) 375--404.
\newblock \href {https://doi.org/10.1051/m2an/2018052}
  {\path{doi:10.1051/m2an/2018052}}.

\bibitem{Dassi2020}
F.~Dassi, A.~Fumagalli, D.~Losapio, S.~Scial\`o, A.~Scotti, G.~Vacca, The mixed
  virtual element method on curved edges in two dimensions, Computer Methods in
  Applied Mechanics and Engineering 386 (2021).
\newblock \href {https://doi.org/10.1016/j.cma.2021.114098}
  {\path{doi:10.1016/j.cma.2021.114098}}.

\bibitem{Dassi2021a}
F.~Dassi, A.~Fumagalli, I.~Mazzieri, A.~Scotti, G.~Vacca, A virtual element
  method for the wave equation on curved edges in two dimensions, Tech. rep.,
  arXiv:2106.06314 [math.NA] (2021).

\bibitem{BeiraodaVeiga2013a}
L.~Beir\~{a}o~da Veiga, F.~Brezzi, A.~Cangiani, G.~Manzini, L.~D. Marini,
  A.~Russo,
  \href{https://www.worldscientific.com/doi/abs/10.1142/S0218202512500492}{Basic
  principles of virtual element methods}, Mathematical Models and Methods in
  Applied Sciences 23~(01) (2013) 199--214.
\newblock \href {https://doi.org/10.1142/S0218202512500492}
  {\path{doi:10.1142/S0218202512500492}}.
\newline\urlprefix\url{https://www.worldscientific.com/doi/abs/10.1142/S0218202512500492}

\bibitem{BeiraodaVeiga2014a}
L.~Beir\~{a}o~da Veiga, F.~Brezzi, L.~D. Marini, A.~Russo, The hitchhiker's
  guide to the virtual element method, Mathematical Models and Methods in
  Applied Sciences 24~(08) (2014) 1541--1573.
\newblock \href {https://doi.org/10.1142/S021820251440003X}
  {\path{doi:10.1142/S021820251440003X}}.

\bibitem{Brezzi2014}
F.~Brezzi, R.~S. Falk, D.~L. Marini, Basic principles of mixed virtual element
  methods, ESAIM: M2AN 48~(4) (2014) 1227--1240.
\newblock \href {https://doi.org/10.1051/m2an/2013138}
  {\path{doi:10.1051/m2an/2013138}}.

\bibitem{BeiraodaVeiga2017}
L.~Beir{\~a}o~da Veiga, F.~Dassi, A.~Russo, High-order virtual element method
  on polyhedral meshes, Computers \& Mathematics with Applications 74~(5)
  (2017) 1110--1122.
\newblock \href {https://doi.org/10.1016/j.camwa.2017.03.021}
  {\path{doi:10.1016/j.camwa.2017.03.021}}.

\bibitem{BeiraodaVeiga2018}
L.~Beir{\~a}o~da Veiga, F.~Brezzi, F.~Dassi, L.~D. Marini, A.~Russo,
  \href{https://doi.org/10.1137/18M1169886}{A family of three-dimensional
  virtual elements with applications to magnetostatics}, SIAM Journal on
  Numerical Analysis 56~(5) (2018) 2940--2962.
\newblock \href {https://doi.org/10.1137/18M1169886}
  {\path{doi:10.1137/18M1169886}}.
\newline\urlprefix\url{https://doi.org/10.1137/18M1169886}

\bibitem{BeiraodaVeiga2020}
L.~Beir{\~a}o~da Veiga, F.~Dassi, G.~Vacca, The stokes complex for virtual
  elements in three dimensions, Mathematical Models and Methods in Applied
  Sciences 30~(03) (2020) 477--512.
\newblock \href {https://doi.org/10.1142/S0218202520500128}
  {\path{doi:10.1142/S0218202520500128}}.

\bibitem{Benedetto2014}
M.~F. Benedetto, S.~Berrone, S.~Pieraccini, S.~Scial{\`o}, The virtual element
  method for discrete fracture network simulations, Computer Methods in Applied
  Mechanics and Engineering 280~(0) (2014) 135--156.
\newblock \href {https://doi.org/http://dx.doi.org/10.1016/j.cma.2014.07.016}
  {\path{doi:http://dx.doi.org/10.1016/j.cma.2014.07.016}}.

\bibitem{Fumagalli2016a}
A.~Fumagalli, E.~Keilegavlen, Dual virtual element method for discrete
  fractures networks, SIAM Journal on Scientific Computing 40~(1) (2018)
  B228--B258.
\newblock \href {https://doi.org/10.1137/16M1098231}
  {\path{doi:10.1137/16M1098231}}.

\bibitem{Benedetto2016}
M.~F. Benedetto, S.~Berrone, A.~Borio, S.~Pieraccini, S.~Scial{\`o}, A hybrid
  mortar virtual element method for discrete fracture network simulations,
  Journal of Computational Physics 306 (2016) 148 -- 166.
\newblock \href {https://doi.org/10.1016/j.jcp.2015.11.034}
  {\path{doi:10.1016/j.jcp.2015.11.034}}.

\bibitem{Benedetto2017}
M.~F. Benedetto, A.~Borio, S.~Scial\`{o}, Mixed virtual elements for discrete
  fracture network simulations, Finite Elements in Analysis and Design 134
  (2017) 55--67.
\newblock \href {https://doi.org/10.1016/j.finel.2017.05.011}
  {\path{doi:10.1016/j.finel.2017.05.011}}.

\bibitem{Fumagalli2017a}
A.~Fumagalli, E.~Keilegavlen, Dual virtual element methods for discrete
  fracture matrix models, Oil \& Gas Science and Technology - Revue d'IFP
  Energies nouvelles 74~(41) (2019) 1--17.
\newblock \href {https://doi.org/10.2516/ogst/2019008}
  {\path{doi:10.2516/ogst/2019008}}.

\bibitem{Raviart1977}
P.-A. Raviart, J.-M. Thomas, A mixed finite element method for second order
  elliptic problems, Lecture Notes in Mathematics 606 (1977) 292--315.
\newblock \href {https://doi.org/10.1007/BFb0064470}
  {\path{doi:10.1007/BFb0064470}}.

\bibitem{Brezzi1985}
F.~Brezzi, J.~Douglas, D.~L. Marini, Two families of mixed finite elements for
  second order elliptic problems, Numerische Mathematik 47~(2) (1985) 217--235.
\newblock \href {https://doi.org/10.1007/BF01389710}
  {\path{doi:10.1007/BF01389710}}.

\bibitem{Brezzi1987}
F.~Brezzi, J.~Douglas, R.~Dur{\'a}n, M.~Fortin, Mixed finite elements for
  second order elliptic problems in three variables, Numerische Mathematik
  51~(2) (1987) 237--250.
\newblock \href {https://doi.org/10.1007/BF01396752}
  {\path{doi:10.1007/BF01396752}}.

\bibitem{Roberts1991}
J.~E. Roberts, J.-M. Thomas, Mixed and hybrid methods, in: Handbook of
  numerical analysis, {V}ol.\ {II}, Handb. Numer. Anal., II, North-Holland,
  Amsterdam, 1991, pp. 523--639.
\newblock \href {https://doi.org/10.1016/S1570-8659(05)80041-9}
  {\path{doi:10.1016/S1570-8659(05)80041-9}}.

\bibitem{Boffi2013}
D.~Boffi, F.~Brezzi, M.~Fortin, Mixed Finite Element Methods and Applications,
  Springer Series in Computational Mathematics, Springer Berlin Heidelberg,
  2013.
\newblock \href {https://doi.org/10.1007/978-3-642-36519-5}
  {\path{doi:10.1007/978-3-642-36519-5}}.

\bibitem{BeiraodaVeiga2014b}
L.~Beir\~{a}o~da Veiga, F.~Brezzi, L.~D. Marini, A.~Russo, {H}(div) and
  {H}(curl)-conforming {VEM}, Numerische Mathematik 133~(2) (2014) 303--332.
\newblock \href {http://arxiv.org/abs/1407.6822} {\path{arXiv:1407.6822}},
  \href {https://doi.org/10.1007/s00211-015-0746-1}
  {\path{doi:10.1007/s00211-015-0746-1}}.

\bibitem{BeiraoVeiga2016}
L.~Beir\~{a}o~da Veiga, F.~Brezzi, L.~D. Marini, A.~Russo, Mixed virtual
  element methods for general second order elliptic problems on polygonal
  meshes, ESAIM: M2AN 50~(3) (2016) 727--747.
\newblock \href {https://doi.org/10.1051/m2an/2015067}
  {\path{doi:10.1051/m2an/2015067}}.

\bibitem{Sommariva2009}
A.~Sommariva, M.~Vianello, Gauss-green cubature and moment computation over
  arbitrary geometries, Journal of Computational and Applied Mathematics
  231~(2) (2009) 886--896.
\newblock \href {https://doi.org/https://doi.org/10.1016/j.cam.2009.05.014}
  {\path{doi:https://doi.org/10.1016/j.cam.2009.05.014}}.

\bibitem{Sommariva2015}
A.~Sommariva, M.~Vianello, Compression of multivariate discrete measures and
  applications, Numerical Functional Analysis and Optimization 36~(9) (2015)
  1198--1223.
\newblock \href
  {http://arxiv.org/abs/https://doi.org/10.1080/01630563.2015.1062394}
  {\path{arXiv:https://doi.org/10.1080/01630563.2015.1062394}}, \href
  {https://doi.org/10.1080/01630563.2015.1062394}
  {\path{doi:10.1080/01630563.2015.1062394}}.

\bibitem{Adams:1975}
R.~A. Adams, Sobolev spaces, Vol.~65 of Pure and Applied Mathematics, Academic
  Press, New York-London, 1975.

\bibitem{apollo}
L.~Beir{\~a}o~da Veiga, F.~Dassi, A.~Russo, High-order virtual element method
  on polyhedral meshes, Computers \& Mathematics with Applications 74~(5)
  (2017) 1110--1122, sI: SDS2016 – Methods for PDEs.
\newblock \href {https://doi.org/https://doi.org/10.1016/j.camwa.2017.03.021}
  {\path{doi:https://doi.org/10.1016/j.camwa.2017.03.021}}.

\bibitem{volley}
L.~Beir{\~a}o~{da Veiga}, F.~Brezzi, A.~Cangiani, G.~Manzini, L.~D. Marini,
  A.~Russo, Basic principles of virtual element methods, Math. Models Methods
  Appl. Sci. 23~(1) (2013) 199--214.

\bibitem{Dassi2019a}
F.~Dassi, S.~Scacchi, Parallel solvers for virtual element discretizations of
  elliptic equations in mixed form, Computers \& Mathematics with Applications
  (2019).
\newblock \href {https://doi.org/https://doi.org/10.1016/j.camwa.2019.07.027}
  {\path{doi:https://doi.org/10.1016/j.camwa.2019.07.027}}.

\bibitem{chen2018some}
L.~Chen, J.~Huang, Some error analysis on virtual element methods, Calcolo
  55~(1) (2018) 1--23.

\bibitem{Girault1986}
V.~Girault, P.-A. Raviart, Finite {E}lement {M}ethods for Navier-Stokes
  {E}quations, Vol.~5 of Springer Series in Computational Mathematics,
  Springer-Verlag Berlin Heidelberg, 1986.
\newblock \href {https://doi.org/10.1007/978-3-642-61623-5}
  {\path{doi:10.1007/978-3-642-61623-5}}.

\bibitem{Droniou2021}
D.~A. Di~Pietro, J.~Droniou, An arbitrary-order discrete de {R}ham complex on
  polyhedral meshes: {E}xactness, {P}oincar\'e inequalities, and consistency,
  Ar{X}iv preprint (2021).
\newblock \href {https://doi.org/arXiv:2101.04940}
  {\path{doi:arXiv:2101.04940}}.

\end{thebibliography}

\end{document}